\newtheorem{thm}[equation]{Theorem}
\numberwithin{equation}{section}
\newtheorem{cor}[equation]{Corollary}
\newtheorem{lem}[equation]{Lemma}
\newtheorem{prop}[equation]{Proposition}
\newtheorem{tab}[equation]{Table}
\begin{document}
\raggedbottom \voffset=-.7truein \hoffset=0truein \vsize=8truein
\hsize=6truein \textheight=8truein \textwidth=6truein
\baselineskip=18truept

\def\mapright#1{\ \smash{\mathop{\longrightarrow}\limits^{#1}}\ }
\def\mapleft#1{\smash{\mathop{\longleftarrow}\limits^{#1}}}
\def\mapup#1{\Big\uparrow\rlap{$\vcenter {\hbox {$#1$}}$}}
\def\mapdown#1{\Big\downarrow\rlap{$\vcenter {\hbox {$\ssize{#1}$}}$}}
\def\mapne#1{\nearrow\rlap{$\vcenter {\hbox {$#1$}}$}}
\def\mapse#1{\searrow\rlap{$\vcenter {\hbox {$\ssize{#1}$}}$}}
\def\mapr#1{\smash{\mathop{\rightarrow}\limits^{#1}}}
\def\ss{\smallskip}
\def\vp{v_1^{-1}\pi}
\def\at{{\widetilde\alpha}}
\def\sm{\wedge}
\def\la{\langle}
\def\ra{\rangle}
\def\on{\operatorname}
\def\ol#1{\overline{#1}{}}
\def\spin{\on{Spin}}
\def\lbar{ \ell}
\def\qed{\quad\rule{8pt}{8pt}\bigskip}
\def\ssize{\scriptstyle}
\def\a{\alpha}
\def\bz{{\Bbb Z}}
\def\im{\on{im}}
\def\ct{\widetilde{C}}
\def\ext{\on{Ext}}
\def\sq{\on{Sq}}
\def\eps{\epsilon}
\def\ar#1{\stackrel {#1}{\rightarrow}}
\def\br{{\bold R}}
\def\bC{{\bold C}}
\def\bA{{\bold A}}
\def\bB{{\bold B}}
\def\bD{{\bold D}}
\def\bh{{\bold H}}
\def\bQ{{\bold Q}}
\def\bP{{\bold P}}
\def\bx{{\bold x}}
\def\bo{{\bold{bo}}}
\def\si{\sigma}
\def\Ebar{{\overline E}}
\def\dbar{{\overline d}}
\def\Sum{\sum}
\def\tfrac{\textstyle\frac}
\def\tb{\textstyle\binom}
\def\Si{\Sigma}
\def\w{\wedge}
\def\equ{\begin{equation}}
\def\b{\beta}
\def\G{\Gamma}
\def\g{\gamma}
\def\k{\kappa}
\def\psit{\widetilde{\Psi}}
\def\tht{\widetilde{\Theta}}
\def\psiu{{\underline{\Psi}}}
\def\thu{{\underline{\Theta}}}
\def\aee{A_{\text{ee}}}
\def\aeo{A_{\text{eo}}}
\def\aoo{A_{\text{oo}}}
\def\aoe{A_{\text{oe}}}
\def\vbar{{\overline v}}
\def\endeq{\end{equation}}
\def\sn{S^{2n+1}}
\def\zp{\bold Z_p}
\def\A{{\cal A}}
\def\P{{\mathcal P}}
\def\cj{{\cal J}}
\def\zt{{\bold Z}_2}
\def\bs{{\bold s}}
\def\bof{{\bold f}}
\def\bq{{\bold Q}}
\def\be{{\bold e}}
\def\Hom{\on{Hom}}
\def\ker{\on{ker}}
\def\coker{\on{coker}}
\def\da{\downarrow}
\def\colim{\operatornamewithlimits{colim}}
\def\zphat{\bz_2^\wedge}
\def\io{\iota}
\def\Om{\Omega}
\def\Prod{\prod}
\def\e{{\cal E}}
\def\exp{\on{exp}}
\def\abar{{\overline a}}
\def\xbar{{\overline x}}
\def\ybar{{\overline y}}
\def\zbar{{\overline z}}
\def\Rbar{{\overline R}{}}
\def\nbar{{\overline n}}
\def\cbar{{\overline c}}
\def\qbar{{\overline q}}
\def\bbar{{\overline b}}
\def\et{{\widetilde E}}
\def\ni{\noindent}
\def\coef{\on{coef}}
\def\den{\on{den}}
\def\lcm{\on{l.c.m.}}
\def\vi{v_1^{-1}}
\def\ot{\otimes}
\def\psibar{{\overline\psi}}
\def\mhat{{\hat m}}
\def\exc{\on{exc}}
\def\ms{\medskip}
\def\ehat{{\hat e}}
\def\etao{{\eta_{\text{od}}}}
\def\etae{{\eta_{\text{ev}}}}
\def\dirlim{\operatornamewithlimits{dirlim}}
\def\gt{\widetilde{L}}
\def\lt{\widetilde{\lambda}}
\def\st{\widetilde{s}}
\def\ft{\widetilde{f}}
\def\sgd{\on{sgd}}
\def\lfl{\lfloor}
\def\rfl{\rfloor}
\def\ord{\on{ord}}
\def\gd{{\on{gd}}}
\def\rk{{{\on{rk}}_2}}
\def\nbar{{\overline{n}}}
\def\lg{{\on{lg}}}
\def\cR{\mathcal{R}}
\def\cS{\mathcal{S}}
\def\cT{\mathcal{T}}
\def\N{{\Bbb N}}
\def\Z{{\Bbb Z}}
\def\Q{{\Bbb Q}}
\def\R{{\Bbb R}}
\def\C{{\Bbb C}}
\def\l{\left}
\def\r{\right}
\def\mo{\on{mod}}
\def\xt{\times}
\def\notimm{\not\subseteq}
\def\Remark{\noindent{\it  Remark}}

\def\*#1{\mathbf{#1}}
\def\0{$\*0$}
\def\1{$\*1$}
\def\22{$(\*2,\*2)$}
\def\33{$(\*3,\*3)$}
\def\ss{\smallskip}
\def\ssum{\sum\limits}
\def\dsum{\displaystyle\sum}
\def\la{\langle}
\def\ra{\rangle}
\def\on{\operatorname}
\def\o{\on{o}}
\def\U{\on{U}}
\def\lg{\on{lg}}
\def\a{\alpha}
\def\bz{{\Bbb Z}}
\def\eps{\varepsilon}
\def\bc{{\bold C}}
\def\bN{{\bold N}}
\def\nut{\widetilde{\nu}}
\def\tfrac{\textstyle\frac}
\def\b{\beta}
\def\G{\Gamma}
\def\g{\gamma}
\def\zt{{\Bbb Z}_2}
\def\zth{{\bold Z}_2^\wedge}
\def\bs{{\bold s}}
\def\bx{{\bold x}}
\def\bof{{\bold f}}
\def\bq{{\bold Q}}
\def\be{{\bold e}}
\def\lline{\rule{.6in}{.6pt}}
\def\xb{{\overline x}}
\def\xbar{{\overline x}}
\def\ybar{{\overline y}}
\def\zbar{{\overline z}}
\def\ebar{{\overline \be}}
\def\nbar{{\overline n}}
\def\rbar{{\overline r}}
\def\Mbar{{\overline M}}
\def\et{{\widetilde e}}
\def\ni{\noindent}
\def\ms{\medskip}
\def\ehat{{\hat e}}
\def\xhat{{\widehat x}}
\def\nbar{{\overline{n}}}
\def\minp{\min\nolimits'}
\def\N{{\Bbb N}}
\def\Z{{\Bbb Z}}
\def\Q{{\Bbb Q}}
\def\R{{\Bbb R}}
\def\C{{\Bbb C}}
\def\notint{\cancel\cap}
\def\el{\ell}
\def\TC{\on{TC}}
\def\dstyle{\displaystyle}
\def\ds{\dstyle}
\def\Remark{\noindent{\it  Remark}}
\title[Topological complexity of polygon spaces with small code]
{Topological complexity of planar polygon spaces with small genetic code}
\author{Donald M. Davis}
\address{Department of Mathematics, Lehigh University\\Bethlehem, PA 18015, USA}
\email{dmd1@lehigh.edu}
\date{January 19, 2016}

\keywords{Topological complexity, planar polygon spaces}
\thanks {2000 {\it Mathematics Subject Classification}: 55M30, 58D29, 55R80.}

\maketitle
\begin{abstract} We determine lower bounds for the topological complexity of many planar polygon spaces mod isometry. With very few exceptions, the upper and lower bounds given by dimension and cohomology considerations differ by 1. This is true for 130 of the 134 generic 7-gon spaces. Our results apply to spaces of $n$-gons for all $n$, but primarily for
 those whose genetic codes, in the sense of Hausmann and Rodriguez, are moderately small.
 \end{abstract}

\section{Statement of results}\label{intro}

The topological complexity, $\TC(X)$, of a topological space $X$ is, roughly, the number of rules required to specify how to move between any two points of $X$. A ``rule'' must be such that the choice of path varies continuously with the choice of endpoints. (See \cite[\S4]{F}.) We study $\TC(X)$ where $X=\Mbar(\lbar)$ is the space of equivalence classes of oriented $n$-gons in  the plane with consecutive sides of length $\ell_1,\ldots,\ell_n$, identified under translation, rotation, and reflection. (See, e.g., \cite[\S6]{HK}.) Here $\lbar=(\ell_1,\ldots,\ell_n)$ is an $n$-tuple of positive real numbers. Thus
 $$\Mbar(\lbar)=\{(z_1,\ldots,z_n)\in (S^1)^n:\ell_1z_1+\cdots+\ell_nz_n=0\}/O(2).$$
We can think of the sides of the polygon as linked arms of a robot, and then $\TC(X)$ is the number of rules required to program the robot to move from any configuration to any other.

Since permuting the $\ell_i$'s does not affect the space up to homeomorphism, we may assume $\ell_1\le\cdots\le\ell_n$. We assume that  $\lbar$ is {\it generic}, which means that there is no subset $M\subset[n]=\{1,\ldots,n\}$ with $\ds\sum_{i\in M}\ell_i=\tfrac12\dsum_{i=1}^n\ell_i$. When $\lbar$ is generic, $\Mbar(\lbar)$ is an
$(n-3)$-manifold (\cite[p.314]{HK}), and hence, by \cite[Cor 4.15]{F}, satisfies
\begin{equation}\label{ineq1}\TC(\Mbar(\lbar))\le 2n-5.\end{equation}
When $\ell$ is generic, $\Mbar(\ell)$ contains no straight-line polygons. We also assume that $\ell_n<\ell_1+\cdots+\ell_{n-1}$, so that $\Mbar(\ell)$ is nonempty.

It is well-understood that the homeomorphism type of $\Mbar(\lbar)$ is determined by which subsets $S$ of $[n]$ are {\it short}, which means that
$\dsum_{i\in S}\ell_i<\tfrac12\dsum_{i=1}^n\ell_i$. For generic $\lbar$, a subset of $[n]$ which is not short is called {\it long}. Define a partial order on the power set of $[n]$  by $S\le T$ if  $S=\{s_1,\ldots,s_\ell\}$ and $T\supset\{t_1,\ldots,t_\ell\}$ with $s_i\le t_i$ for all $i$.
As introduced in \cite{H}, the {\it genetic code} of $\lbar$  is the  set of maximal elements (called {\it genes}) in the set of short subsets of $[n]$ which contain $n$.
The homeomorphism type of $\Mbar(\lbar)$ is determined by the genetic code of $\lbar$. A list of all genetic codes for $n\le 9$ appears in \cite{web}. For $n=6$, 7, and 8, there are 20, 134, and 2469 genetic codes, respectively.

Our main theorem is a lower bound for $\TC(\Mbar(\ell))$ just 1 less than the upper bound in (\ref{ineq1}) for almost all $\ell$'s whose genetic code consists of a fairly small number of fairly small sets.
\begin{thm}\label{mainthm} Let $\ell=(\ell_1,\ldots,\ell_n)$ with $n\ge5$. If the sizes of the genes of $\ell$ are any of those listed in Table \ref{gT}, and the genetic code of $\ell$ is not $\la521\ra$, $\la6321\ra$, $\la 7321\ra$, or $\la7521\ra$, then
$$\TC(\Mbar(\ell))\ge 2n-6.$$
\end{thm}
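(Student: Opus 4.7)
The strategy is the standard cohomological lower bound for~$\TC$: if $\bar z_1,\ldots,\bar z_k\in\ker(\Delta^*:H^*(X\times X;\Z_2)\to H^*(X;\Z_2))$ have $\bar z_1\cdots\bar z_k\ne 0$, then $\TC(X)\ge k+1$. Over $\Z_2$ every degree-one class $z$ gives a zero-divisor $\bar z=z\ot 1+1\ot z$, so the task is to exhibit $2n-7$ degree-one classes in $H^*(\Mbar(\ell);\Z_2)$ whose associated zero-divisor product is nonzero in $H^{2n-7}(\Mbar(\ell)^2;\Z_2)$. This product lives one below the top dimension of $\Mbar(\ell)^2$, precisely the slack between the upper bound $2n-5$ of (\ref{ineq1}) and the target $2n-6$.

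I would work from the Hausmann--Knutson mod-$2$ presentation
$$H^*(\Mbar(\ell);\Z_2)=\Z_2[R,V_1,\ldots,V_{n-1}]/I,$$
with generators in degree~$1$, squaring relations $V_i^2=RV_i$, and one ideal generator per gene of $\ell$. Expanding $\bar z_1\cdots\bar z_{2n-7}$ by K\"unneth gives $\sum_{A}\bigl(\prod_{i\in A}z_i\bigr)\ot\bigl(\prod_{i\notin A}z_i\bigr)$ over $A\subseteq\{1,\ldots,2n-7\}$. The aim is to pick the $z_i$ so that, after imposing the relations, exactly one bihomogeneous component (say in $H^{n-3}\ot H^{n-4}$) survives: the top-side factor is a nonzero top class (Poincar\'e dual to a point in the $(n-3)$-manifold $\Mbar(\ell)$), and no other bipartition yields a surviving pair of matching bidegree to cancel it mod~$2$. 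The squaring relations $V_i^2=RV_i$ are the key reduction tool, collapsing arbitrary products to squarefree $V$-monomials times powers of~$R$.

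The proof then splits into a case analysis by the row of Table~\ref{gT} into which the genetic code falls. For each profile I would prescribe an explicit multiset $\{z_1,\ldots,z_{2n-7}\}$ built from $R$ and a targeted subset of the $V_i$'s, check using the gene relations that the intended top monomial does not vanish in $H^{n-3}(\Mbar(\ell);\Z_2)$, and carry out the cancellation bookkeeping. This bookkeeping is the main obstacle: the gene relations impose nontrivial identifications among squarefree $V$-monomials, so ruling out coincident K\"unneth summands of matching bidegree requires a delicate combinatorial check for each profile, and it is natural to expect that handling all profiles uniformly will require parameterizing the argument by a small number of invariants of the gene structure (such as the largest index appearing in a non-singleton gene) rather than brute force. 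The excluded codes $\la 521\ra$, $\la 6321\ra$, $\la 7321\ra$, $\la 7521\ra$ are exactly those for which the short-subset combinatorics force every such multiset of $2n-7$ degree-one zero-divisors to produce cancelling pairs in the desired bidegree, so the zero-divisor method genuinely fails there and the corresponding codes must be excluded from the conclusion.
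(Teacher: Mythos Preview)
Your outline identifies the correct high-level strategy---zero-divisor cup-length with degree-one classes from the Hausmann--Knutson presentation---and this is exactly the paper's approach. But what you have written is a plan, not a proof: the entire content of the theorem lies in the case analysis you defer, and your proposed mechanism for detecting nonvanishing is both too vague and likely too restrictive. You aim to arrange that ``exactly one bihomogeneous component survives'' with no cancelling partner. The paper does not do this, and in most cases probably cannot: after reducing via $V_i^2=RV_i$, the expansion in $H^m\otimes H^{m-1}$ typically has many surviving terms that are not obviously independent. The paper's device is to apply $\phi\otimes\psi$, where $\phi:H^m\to\Z_2$ is the Poincar\'e-duality isomorphism (computed explicitly for each gene profile) and $\psi:H^{m-1}\to\Z_2$ is a \emph{uniform} homomorphism chosen to make the resulting scalar equal $1$. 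Existence of such a $\psi$ becomes a question of solvability of a linear system over $\Z_2$ (the relations (\ref{(3)}) on $H^{m-1}$ together with one inhomogeneous equation), and this is what the paper verifies---sometimes by hand, often by \texttt{Maple}, and with different choices of the product $\ol{w_1}^{\alpha_1}\cdots\ol{w_k}^{\alpha_k}\Rbar^\beta$ depending on parities of the gene parameters and on whether $m$ or $m-1$ is a $2$-power. Without this linear-algebra detection step, the ``bookkeeping'' you allude to has no clear endpoint.

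Two further points. First, your description of the ideal is inaccurate: there is not ``one ideal generator per gene'' but rather a family of relations (\ref{(3)}) indexed by subgees $S$ with $|S|\ge n-2-d$ in each degree $d$; these relations are what make $\phi$ and $\psi$ computable. Second, your explanation of the four excluded codes is not quite right. The codes $\la 521\ra$ and $\la 6321\ra$ are tori, where $\TC=n-2<2n-6$, so the conclusion genuinely fails. The codes $\la 7321\ra$ and $\la 7521\ra$ are excluded not because ``every multiset cancels'' in some combinatorial sense, but because the specific product the paper uses in those gene profiles (Proposition~\ref{prop111}) requires $m>4$, and for $n=7$ one has $m=4$; the paper leaves open whether $\TC\ge 2n-6$ holds there.
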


Here and throughout, we write genes consisting of 1-digit numbers, or $n$ followed by 1-digit numbers, by concatenating those digits. We now explain the ``T'' (Type) notation that appears in Table \ref{gT}.
We say that a genetic code $\la S_1,S_2\ra$  has Type 1 ($T_1$) if $1\in S_1\cap S_2$.
To describe genetic codes of Type 2, it is useful to introduce the notion of {\it gees}, which are genes without the $n$. This useful terminology will pervade the paper. For example, since $\la n31\ra$ is a genetic code for $n\ge 6$, the associated gee is $31$. Since all genes for $\ell=(\ell_1,\ldots,\ell_n)$ include the element $n$, it is worthwhile for many reasons to omit writing the $n$. In Table \ref{gT}, the associated gees all have size 1 less than the listed gene sizes. For genetic codes of the indicated sizes, we say they have Type 2 ($T_2$) if they are not of Type 1 and  their gees are those of a genetic code with $n=7$.
We considered these so that we could complete the case $n=7$. For example, $\la 7521,762\ra$ is a genetic code, so $\la n521,n62\ra$ appears in our table as a Type-2 code of sizes 4 and 3. On the other hand $\la 7521,763\ra$ is not a genetic code (because $\{6,4,3\}$ would be both long and short) while $\la 8521,863\ra$ is a genetic code, and so the analysis of codes $\la n521,n63\ra$ for $n\ge8$ does not appear in our table. Certainly it could be handled by our methods, but we had to stop somewhere.

\begin{tab}\label{gT}

\begin{center}
{\scalefont{1}{
$$\renewcommand\arraystretch{1.0}\begin{array}{c|cccc}
&\multicolumn{4}{c}{\text{Number of occurrences}}\\
\text{Gene sizes}&n=5&n=6&n=7&n=8\\
\hline
2&4&5&6&7\\
3&&5&15&21\\
4&&&4&21\\
3,3&&&15&35\\
4,3\ T_1&&&8&20\\
4,3\ T_2&&&10&10\\
3,3,3\ T_2&&&1&1\\
4,3,3\ T_2&&&14&14\\
4,3,3,3\ T_2&&&2&2\\
\text{anything},2&&8&55&559
\end{array}$$}}
\end{center}
\end{tab}

The cases 521 and 6321 excluded in Theorem \ref{mainthm} are homeomorphic to tori $(S^1)^2$ and $(S^1)^3$. It is known (\cite[(4.12)]{F}) that $\TC((S^1)^{n-3})=n-2$;  its genetic code is $\la\{n,n-3,n-4,\ldots,1\}\ra$.
Another case in which it is not true that $\TC(\Mbar(\lbar))\ge 2n-6$ is the case in which the genetic code is $\la\{ n\}\ra$. This space $\Mbar(\lbar)$
 is homeomorphic to $RP^{n-3}$, for which the TC is often much less than $2n-7$.(\cite[\S4.8]{F},\cite{D})

For $n=6$ and $n=7$, our analysis is essentially complete. Of the 20 equivalence classes of 6-gon spaces, the 18 for which we prove $\TC\ge 2n-6$ plus the torus and projective space is a complete list.
 Of the 134 equivalence classes of 7-gon spaces, we prove $\TC\ge 2n-6$ for 130 of them. Two others are the torus and projective space, and two are those listed in Theorem \ref{mainthm} for which our method does not imply $TC\ge 2n-6$.

 Of the 2469 equivalence classes of 8-gon spaces, our results only apply to 690. We feel that our work presents very strong evidence that almost all of them will satisfy $\TC\ge 2n-6$, provably by  cohomological methods. There will be a few exceptions, similarly to the case $n=7$. But more work needs to be done.

We will prove in Theorems \ref{first}, \ref{high}, \ref{cohthm}, \ref{norange}, and \ref{finale}, and in Sections \ref{33sec} and \ref{T1sec}, that, if $X=\Mbar(\lbar)$ with genetic codes of the types listed in Table \ref{gT}, then there are classes $v_1,\ldots,v_{2n-7}$ in $H^1(X)$ such that \begin{equation}\label{goal}\prod_{i=1}^{2n-7}(v_i\times1+1\times v_i)\ne0\in
H^*(X\times X).\end{equation} Theorem \ref{mainthm} then follows from \cite[Cor 4.40]{F}.
Here and throughout, all cohomology groups have coefficients in $\zt$.

\section{Proof of Theorem \ref{mainthm} for genetic codes with a  gene of size 2 or only one gene, which has size 3}\label{nkbsec}
 The first two and last cases of Table \ref{gT} are handled in this section. This also prepares us for more complicated analyses which appear later.

We will make much use of the following description of $H^*(\Mbar(\lbar))$, a slight reinterpretation of the result originally obtained in \cite[Cor 9.2]{HK}.
We introduce the term {\it subgee} for a subset $S$ which satisfies $S\le G$ for some gee $G$ of $\ell$. This is equivalent to saying that $S\cup\{ n\}$ is short, but is more directly related to the genetic code.
\begin{thm}  If $\lbar=(\ell_1,\ldots,\ell_n)$, then the ring $H^*(\Mbar(\lbar))$ is generated by classes $R,V_1,\ldots,V_{n-1}$ in $H^1(\Mbar(\lbar))$ subject to only the following relations:
\begin{enumerate}
\item All monomials of the same degree which are divisible by the same $V_i$'s are equal. Hence monomials $R^kV_S:=R^k\prod_{i\in S}V_i$ for $S\subset[n-1]$ span $H^*(\Mbar(\ell))$.
\item $V_S=0$ unless $S$ is a subgee of $\ell$.
\item If $S$ is a subgee with $|S|\ge n-2-d$, then,
\begin{equation}\label{(3)}\sum_{T\notint S}R^{d-|T|}V_T =0.\end{equation}
\end{enumerate}\label{relnsprop}\end{thm}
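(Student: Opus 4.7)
Since the theorem is flagged as a slight reinterpretation of Hausmann--Knutson \cite[Cor 9.2]{HK}, my plan is to import their presentation of $H^*(\Mbar(\ell);\zt)$, which has the same degree-one generators $R$ and $V_1,\ldots,V_{n-1}$, and then rewrite its relations in the subgee language. The three items of the theorem should correspond, respectively, to HK's quadratic relations $V_i^2=RV_i$, to the vanishing of $V_T$ for subsets $T$ whose augmentation by $n$ is long, and to the family of Poincar\'e-duality-style relations arising from the fundamental class of the $(n-3)$-manifold $\Mbar(\ell)$.

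For item (1), a short induction on the total degree of a monomial uses $V_i^2=RV_i$ to collapse every product $R^aV_1^{b_1}\cdots V_{n-1}^{b_{n-1}}$ to $R^kV_S$ where $S=\{i:b_i\ge1\}$ and $k$ is forced by the degree; this simultaneously gives the spanning set $\{R^kV_S\}$ and identifies any two monomials of the same degree sharing the same support. Item (2) is then immediate from the definition of subgee: a subset $T\subseteq[n-1]$ is a subgee exactly when $T\cup\{n\}$ is short, which is exactly when HK do \emph{not} force $V_T=0$. In particular, the relations in HK of the form $V_L=0$ for $L\cup\{n\}$ long are precisely those in (2).

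The substance of the argument is (3), for which I would argue in both directions. On the one hand, each element $\sum_{T\not\subseteq S}R^{d-|T|}V_T$, for a subgee $S$ with $|S|\ge n-2-d$, should be exhibited as a consequence of HK's duality relations; the hypothesis $|S|\ge n-2-d$ ensures the relation lives in cohomological degree at least $n-3$, which is where Poincar\'e duality on the $(n-3)$-manifold $\Mbar(\ell)$ places all the content. On the other hand, one must show that HK's full family of duality relations is recovered from (3) modulo (1) and (2). The main obstacle is the combinatorial bookkeeping: verifying that the indexing set $T\not\subseteq S$ together with the size condition on $S$ produces an equivalent family of relations, with no redundancy or omission, and that it suffices to let $S$ range over subgees rather than arbitrary subsets of $[n-1]$ (relations indexed by non-subgee $S$ must be seen to follow from subgee-indexed ones after applying (2) to kill $V_T$ for non-subgee $T$). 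Once this dictionary is in place, the multiplicative structure (multiplying any relation by $R$ produces the next-degree member of the family) reduces the check to the boundary case $|S|=n-2-d$, and the theorem follows.
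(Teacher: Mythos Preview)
Your overall plan---import the Hausmann--Knutson presentation and translate it into the subgee language---is exactly what the paper does: the theorem is stated without proof, as ``a slight reinterpretation of the result originally obtained in \cite[Cor 9.2]{HK}.'' So on the level of approach there is nothing to compare.

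However, your outline contains a misreading that would derail the translation. The symbol $\notint$ in the relation $\sum_{T\notint S}R^{d-|T|}V_T=0$ does \emph{not} mean $T\not\subseteq S$; the paper defines it as ``$T$ does not intersect $S$; i.e., the two sets are disjoint.'' You consistently write $T\not\subseteq S$, and your proposed ``combinatorial bookkeeping'' is organized around that wrong indexing. With the disjointness reading, the coefficient of $\phi_{U'}$ in (\ref{Ueq}) is a product of binomial coefficients counting subsets of each type avoiding $S$, which is exactly how the paper uses these relations (see e.g.~(\ref{r11}), (\ref{r12}), and Table~\ref{matrix}); with $T\not\subseteq S$ those counts are completely different and the subsequent linear algebra would not match.

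A second, smaller point: your claim that the hypothesis $|S|\ge n-2-d$ forces the relation into degree at least $n-3$ is not correct in general. The paper notes that in degree $d=m=n-3$ the condition becomes $|S|\ge1$ and in degree $d=m-1$ it becomes $|S|\ge2$, so relations live in a range of degrees, not only the top one. The correct reading is simply that for each $d$, only sufficiently large subgees $S$ index a relation in that degree.
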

\ni Here the sum is taken over all $T$ for which $T$ does not intersect $S$; i.e., the two sets are disjoint.

From now on, let $m=n-3$ denote $\dim(\Mbar(\ell))$. Relations (\ref{(3)}) in $H^m(\Mbar(\ell))$ require $|S|\ge1$, while those in $H^{m-1}(\Mbar(\ell))$ require $|S|\ge2$. Let $\vbar=v\otimes1+1\otimes v$. We reinterpret (\ref{goal}) as wishing to find classes $v_i$ for which the component of\begin{equation}\label{goal2}\ol{v_1}\cdots\ol{v_{2m-1}}\ne0\in H^m(\Mbar(\ell))\otimes H^{m-1}(\Mbar(\ell)).\end{equation}
Note that the component in $H^{m-1}\ot H^m$ is symmetrical to this, and others are 0 for dimensional reasons.

The first case of (\ref{goal2}), and hence Theorem \ref{mainthm}, follows from the following result.
\begin{thm} If  $\lbar$ has genetic code $\la\{n,a\}\ra$ with $1\le a\le n-1$, and $X=\Mbar(\lbar)$, then
$$\ol{V_1}^{m}\ \ol{R}^{m-1}\ne0\in H^m(X)\otimes H^{m-1}(X).$$\label{first}
\end{thm}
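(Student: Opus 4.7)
The plan is to compute $H^*(X)$ explicitly via Theorem~\ref{relnsprop}, then expand $\overline{V_1}^m\overline{R}^{m-1}$ by the binomial theorem and show its component in $H^m(X)\otimes H^{m-1}(X)$ is nonzero.

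First I would determine the ring structure of $H^*(X)$. Since the only gee of $\ell$ is $\{a\}$, the nonempty subgees are exactly $\{1\},\ldots,\{a\}$, so by Theorem~\ref{relnsprop}(2) one has $V_i=0$ for $i>a$ and $V_iV_j=0$ whenever $i\ne j$; relation (1) then gives $V_i^2=RV_i$, so inductively $V_i^k=V_iR^{k-1}$ for all $k\ge1$. Relation~(\ref{(3)}) in degree $m-1$ requires $|S|\ge 2$ and is therefore vacuous (no subgee is that large), so $\{R^{m-1},R^{m-2}V_1,\ldots,R^{m-2}V_a\}$ is a $\zt$-basis of $H^{m-1}(X)$. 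In degree $m$, relation~(\ref{(3)}) with $S=\{i\}$ reads $R^m=\sum_{j\le a,\,j\ne i}R^{m-1}V_j$; comparing these equations for different values of $i$ forces $R^{m-1}V_1=\cdots=R^{m-1}V_a$, so $H^m(X)$ is spanned by $R^{m-1}V_1$. Closedness of the $m$-manifold $X$ gives $H^m(X;\zt)\cong\zt$, and therefore $R^{m-1}V_1\ne0$.

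Next I would expand
\[
\overline{V_1}^{m}\,\overline{R}^{m-1}=\sum_{k,j}\binom{m}{k}\binom{m-1}{j}\,V_1^kR^j\otimes V_1^{m-k}R^{m-1-j}
\]
and extract the component in $H^m(X)\otimes H^{m-1}(X)$, which comes from the pairs with $k+j=m$ (hence $k\ge 1$). Using $V_1^kR^j=V_1R^{m-1}$ for every $k\ge 1$, together with $V_1^{m-k}R^{m-1-j}=V_1R^{m-2}$ when $m-k\ge 1$ and $V_1^0R^{m-1}=R^{m-1}$ in the boundary case $k=m$, this component simplifies to
\[
V_1R^{m-1}\otimes R^{m-1}\;+\;C\cdot V_1R^{m-1}\otimes V_1R^{m-2}
\]
for some $C\in\zt$, the coefficient $\binom{m}{m}\binom{m-1}{0}=1$ of the first summand being what matters. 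Since $R^{m-1}$ and $V_1R^{m-2}$ are distinct basis elements of $H^{m-1}(X)$ by the first step, and $V_1R^{m-1}$ generates $H^m(X)$, these two tensor summands are linearly independent in $H^m(X)\otimes H^{m-1}(X)$, and so their sum is nonzero regardless of the value of $C$.

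The main obstacle is the cohomological bookkeeping in the first step: one has to verify that relation~(\ref{(3)}) imposes exactly the identifications $R^{m-1}V_1=\cdots=R^{m-1}V_a$ in degree $m$ but is completely vacuous in degree $m-1$, so that $R^{m-1}$ and $V_1R^{m-2}$ remain independent. After that is settled, the binomial expansion is a routine formal computation.
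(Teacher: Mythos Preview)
Your proof is correct and follows essentially the same approach as the paper: compute $H^{m-1}$ and $H^m$ from Theorem~\ref{relnsprop}, expand $\ol{V_1}^m\,\Rbar^{m-1}$ binomially, and isolate the term $R^{m-1}V_1\otimes R^{m-1}$. The only difference is cosmetic: the paper actually evaluates the coefficient you call $C$ as $\binom{2m-1}{m}+1$ via Vandermonde, whereas you correctly observe that the value of $C$ is irrelevant once $R^{m-1}$ and $V_1R^{m-2}$ are known to be independent in $H^{m-1}(X)$.
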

\begin{proof} Since there are no gees of size $\ge2$, $H^{m-1}(X)$ has $\{R^{m-1},R^{m-2}V_1,\ldots,R^{m-2}V_a\}$ as basis, while in $H^m(X)$, the relations of type (\ref{(3)}) are
$$R^{m}+\sum^a_{\substack{j=1\\j\ne i}}R^{m-1}V_j=0$$
for $1\le i\le a$.
Subtracting pairs of relations reduces this set of relations to $R^{m-1}V_1=\cdots=R^{m-1}V_a$ and $R^{m}=(a-1)R^{m-1}V_1$.
 We obtain, in bidegree $(m,m-1)$,
\begin{eqnarray}&&(V_1\otimes 1+1\otimes V_1)^m(R\otimes 1+1\otimes R)^{m-1}\nonumber\\
&=&\sum\tbinom mi\tbinom{m-1}{m-i}V_1^iR^{m-i}\otimes V_1^{m-i}R^{i-1}\nonumber\\
&=&(\tbinom{2m-1}m+1)R^{m-1}V_1\otimes R^{m-2}V_1+R^{m-1}V_1\otimes R^{m-1}.\label{sum}\end{eqnarray}
Here we have used that $\sum\binom mi\binom{m-1}{m-i}=\binom{2m-1}m$ and noted that all nonzero terms $V_1^iR^{m-i}\otimes V_1^{m-i}R^{i-1}$ in the sum are equivalent to $R_1^{m-1}V_1\otimes R^{m-2}V_1$ except the one with $i=m$.
Since $\{R^{m-1},R^{m-2}V_1\}$ is linearly independent and $R_1^{m-1}V_1\ne0$, (\ref{sum}) is nonzero.
\end{proof}

The following result, with an amazingly simple proof, appears in the last line of Table \ref{gT}. It is our broadest result, but we have not been able to apply the method to other situations.
\begin{thm}\label{high} If the genetic code of  $\ell$ contains a gene $\{n,b\}$ and at least one other gene, and $X=\Mbar(\ell)$, then $\TC(X)\ge 2n-6$.\end{thm}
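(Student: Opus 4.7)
The plan is to apply the cohomological lower bound (\ref{goal2}), exactly as in Theorem \ref{first}: exhibit $u,w\in H^1(X)$ such that the $(m,m-1)$ component of $\overline{u}^{\,m}\,\overline{w}^{\,m-1}$ is nonzero in $H^m(X)\otimes H^{m-1}(X)$, where $m=n-3$; this forces $\TC(X)\ge 2m=2n-6$ by \cite[Cor.~4.40]{F}.

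First I would make a structural observation about the second gene $G$ guaranteed by hypothesis. Because $\{n,b\}$ is \emph{maximal} among genes, the partial-order definition forces every element of the gee of $G$ to be strictly less than $b$, so the gee of $G$ lies in $\{1,\ldots,b-1\}$ and has size at least $2$. Consequently $S=\{c_1,c_2\}$ is a size-$2$ subgee for some $c_1,c_2<b$, and the associated type-$(3)$ relation in $H^{m-1}(X)$ exhibits
$$R^{m-1}\;=\;\sum_{\substack{T\text{ subgee}\\ T\cap S=\emptyset,\,T\ne\emptyset}} R^{m-1-|T|}\,V_T;$$
crucially, this relation is \emph{not} present in the cohomology of the single-gene code $\la\{n,b\}\ra$ used in Theorem \ref{first}.

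Next I would reuse the computation from the proof of Theorem \ref{first} verbatim, with $V_b$ in place of $V_1$: the same binomial expansion, combined with relation (1) of Theorem \ref{relnsprop} (which collapses all same-degree monomials divisible by the same $V_i$'s to a single class) gives
$$\overline{V_b}^{\,m}\,\overline{R}^{\,m-1}\;=\;\bigl(\tbinom{2m-1}{m}+1\bigr)\,R^{m-1}V_b\otimes R^{m-2}V_b\;+\;R^{m-1}V_b\otimes R^{m-1}$$
in bidegree $(m,m-1)$. Substituting the rewriting of $R^{m-1}$ from the previous paragraph produces additional summands of the form $R^{m-1}V_b\otimes R^{m-2}V_k$ (for singleton subgees $\{k\}$ disjoint from $S$) and $R^{m-1}V_b\otimes R^{m-3}V_T$ (for size-$2$ subgees $T$ disjoint from $S$). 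To finish, I would argue that at least one such summand lies in a basis direction of $H^m(X)\otimes H^{m-1}(X)$ distinct from $R^{m-1}V_b\otimes R^{m-2}V_b$, so the total is nonzero.

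The main obstacle I anticipate is the bookkeeping required in narrow subcases -- for instance $b=3$ with gee of $G$ equal to $\{1,2\}$, where no singleton $\{k\}$ with $k\ne b$ is disjoint from $S$, so the surviving summand must come from a size-$2$ subgee like $\{b,k'\}$ with $k'>b$. The key structural fact that should make the survival argument go through uniformly is that every new type-$(3)$ relation introduced by $G$ arises from subgees of the gee of $G$, which lies in $\{1,\ldots,b-1\}$ and therefore never involves $V_b$; this keeps the singleton class $R^{m-2}V_b$ linearly independent from every higher-multiplicity class used to rewrite $R^{m-1}$, and guarantees that the candidate basis direction identified above is genuinely nonzero.
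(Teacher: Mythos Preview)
Your approach has a genuine gap: the product $\ol{V_b}^{\,m}\,\Rbar^{\,m-1}$ can vanish in $H^m(X)\otimes H^{m-1}(X)$. Take $n=6$, $m=3$, with gees $\{3\}$ and $\{1,2\}$. The only size-$\ge 2$ subgee is $\{1,2\}$, and the corresponding relation in $H^2(X)$ is $R^2+RV_3=0$. Your bidegree-$(3,2)$ computation gives
\[
\ol{V_3}^{\,3}\,\Rbar^{\,2}=R^2V_3\otimes\bigl(RV_3+R^2\bigr)=R^2V_3\otimes 0=0.
\]
More generally your element factors as $R^{m-1}V_b\otimes\bigl[(\tbinom{2m-1}{m}+1)R^{m-2}V_b+R^{m-1}\bigr]$, and the second tensor factor is zero precisely because the new relations rewrite $R^{m-1}$ in terms of $R^{m-2}V_b$ (plus other terms that may be absent).

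Your proposed rescue for the $b=3$ subcase and your ``key structural fact'' both rest on a misconception. You assert that the type-(\ref{(3)}) relations coming from subgees $S$ of the second gee ``never involve $V_b$''. In fact the opposite is true: since every such $S$ lies in $[b-1]$, the set $T=\{b\}$ is always disjoint from $S$, so the term $R^{d-1}V_b$ appears in \emph{every} one of these relations. This is exactly why $R^{m-2}V_b$ is not independent of the rewriting of $R^{m-1}$. Your suggestion to use a size-$2$ subgee $\{b,k'\}$ with $k'>b$ also fails, since no subgee contains any element exceeding $b$.

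The paper's argument avoids $V_b$ in the product altogether. It first observes (as you do) that every size-$\ge 2$ subgee lies in $[b-1]$, and uses this to build a homomorphism $\psi:H^{m-1}(X)\to\zt$ sending $R^{m-1}$ and $R^{m-2}V_b$ to $1$ and all other monomials to $0$; this is well-defined precisely because both terms occur in every relation. It then uses Poincar\'e duality to locate a nonempty $\{i_1,\ldots,i_t\}\subset[b-1]$ with $\phi(R^{m-t}V_{i_1}\cdots V_{i_t})=1$, and takes the product $\ol{V_{i_1}}^{\,m+1-t}\,\ol{V_{i_2}}\cdots\ol{V_{i_t}}\,\Rbar^{\,m-1}$. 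Since none of the $V_{i_j}$ is $V_b$, every term in the expansion has second factor annihilated by $\psi$ except the single term with second factor $R^{m-1}$, which survives. The choice of classes is thus dictated by $\phi$, not fixed in advance as $V_b$.
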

\begin{proof} If $G$ is another gee of $\ell$, then $G$ must contain at least two elements, all less than $b$, for otherwise one of $G$ and $\{b\}$ would be $\ge$ the other.
There is a homomorphism $\psi:H^{m-1}(X)\to\zt$ sending $R^{m-1}$ and $R^{m-2}V_b$ to 1, and all other monomials to 0. This is true because for every subgee $S$ containing at least two elements, these two monomials both appear in the sum in (\ref{(3)}).

For the Poincar\'e duality isomorphism $\phi:H^m(X)\to\zt$, there must be a nonempty subset $\{i_1,\ldots,i_t\}\subset[b-1]$ such that $\phi(R^{m-t}V_{i_1}\cdots V_{i_t})\ne0$. To see why, assume to the contrary. Then letting $S=\{b\}$ in (\ref{(3)}) implies that $\phi(R^m)=0$, and then letting $S$ in (\ref{(3)}) be one of the gees other than $\{b\}$ implies $\phi(R^{m-1}V_b)=0$, and hence $\phi$ would be identically zero, contradicting that it is an isomorphism.

Now let $R^{m-t}V_{i_1}\cdots V_{i_t}$ satisfy $\phi(R^{m-t}V_{i_1}\cdots V_{i_t})=1$ and let $\psi$ be as in the first paragraph of this proof. Then $$(\phi\otimes\psi)(\ol{V_{i_1}}^{m+1-t}\ \ol{V_{i_2}}\cdots\ol{V_{i_t}}\ \Rbar^{m-1})=\phi(V_{i_1}^{m+1-t}V_{i_2}\cdots V_{i_t})\psi(R^{m-1})=1,$$
because all other terms in the expansion have their second factor annihilated by $\psi$.\end{proof}

The case of (\ref{goal2}) and hence Theorem \ref{mainthm} corresponding to the second line of Table \ref{gT} follows immediately from the following result, the proof of which  will occupy the rest of this section.

\begin{thm}\label{cohthm} Let $X=\Mbar(\lbar)$ with genetic code $\la\{n,a+b,a\}\ra$ with $n>a+b>a>0$ and $n\ge6$. Let $m=n-3$ and
suppose $2^{e-1}<m\le 2^e$. Then in $H^*(X\times X)$,
\begin{enumerate}
\item[a.] if $a$ is even, then $\ol{V_1}^{2m-1-2^e}\ \ol{V_{a+b}}\ \Rbar^{2^e-1}\ne0$;
\item[b.] if $a$ is odd and $m\ne2^{e-1}+1$, then $\ol{V_1}^{m-1}\ \ol{V_{a+b}}^2\ \Rbar^{m-2}\ne0$;
\item[c.] if $a$ is odd and $m=2^{e-1}+1$, then $\ol{V_1}^m\ \ol{V_{a+b}}^{m-1}\ne0$.
\end{enumerate}
\end{thm}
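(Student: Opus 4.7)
The plan is, in each of cases (a), (b), (c), to compute the bidegree-$(m,m-1)$ component of the proposed product $\ol{v_1}\cdots\ol{v_{2m-1}}$ in $H^*(X\xt X)$ (by symmetry the $(m-1,m)$-component vanishes iff this one does, and other bidegrees vanish for dimension reasons) and to verify that it is nonzero. After expanding by the binomial theorem in each tensor factor, Theorem \ref{relnsprop}(1) collapses every monomial $V_1^iV_{a+b}^jR^k\in H^m(X)$ to one of the four normal forms $R^m$, $R^{m-1}V_1$, $R^{m-1}V_{a+b}$, or $R^{m-2}V_1V_{a+b}$ according to whether $i,j$ are positive, and analogously for $H^{m-1}(X)$. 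So each of (a), (b), (c) reduces to a $\zt$-linear combination of a handful of tensor monomials, with coefficients that are partial binomial sums.

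The exponents in each case are engineered to force favorable simplifications via Lucas's and Vandermonde's theorems. In (a), the choice $q=2^e-1$ of the $\ol R$-exponent makes every $\binom{q}{k}$ odd for $0\le k\le q$, and the resulting Vandermonde convolution evaluates the interior sum to $\binom{2m-2}{m-1}$, which is even for $m\ge 2$; hence only the boundary terms $i=0$ and $i=p$ survive. In (c), $m-1=2^{e-1}$ is a power of $2$, so $\ol{V_{a+b}}^{\,m-1}=V_{a+b}^{m-1}\ot 1+1\ot V_{a+b}^{m-1}$ by the Frobenius identity, cutting the expansion to two nonvanishing terms after using that $\binom{m}{1}=m$ is odd (which holds because $m=2^{e-1}+1$ is odd for $e\ge 2$, guaranteed by $n\ge 6$). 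In (b), $\ol{V_{a+b}}^{\,2}=V_{a+b}^2\ot 1+1\ot V_{a+b}^2$, and the remaining Vandermonde sums are $\binom{2m-3}{m-2}$ and $\binom{2m-3}{m}$; a Lucas-theorem computation shows $\binom{2m-3}{m-1}$ is odd precisely when $m-1$ is a power of $2$, i.e., $m=2^{e-1}+1$, the case excluded from (b) and handled by (c).

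To verify that the reduced combination is nonzero in $H^m(X)\ot H^{m-1}(X)$, I would exhibit a splitting $\phi\ot\psi$ sending it to $1\in\zt$. The map $\phi:H^m(X)\to\zt$ is the Poincar\'e-duality isomorphism; I determine its values on $R^m, R^{m-1}V_1, R^{m-1}V_{a+b}, R^{m-2}V_1V_{a+b}$ by iteratively applying the singleton relations \eqref{(3)} for $S=\{i\}$, $i\le a+b$, and the pair relations for subgees $\{i,j\}$, thereby reducing each normal form to a fixed generator. For $\psi:H^{m-1}(X)\to\zt$ I similarly reduce the four relevant classes $R^{m-1}, R^{m-2}V_1, R^{m-2}V_{a+b}, R^{m-3}V_1V_{a+b}$ via the pair relations \eqref{(3)}; the parity of $a$ controls which subgee pairs $\{i,j\}$ exist and hence which relations are independent, which is the underlying reason for the three-way case split.

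The principal obstacle is the Lucas-theorem bookkeeping in case (b), where the parities of several competing binomial coefficients depend delicately on the binary expansion of $m$, and one must check that the overall linear combination remains nonzero in $H^m\ot H^{m-1}$ for every $m$ with $2^{e-1}<m\le 2^e$ and $m\ne 2^{e-1}+1$. Cases (a) and (c) are, by contrast, engineered so that a single identity (``all $\binom{2^e-1}{k}$ are odd'' or ``$(\ol v)^{2^{e-1}}$ splits as $v^{2^{e-1}}\ot 1+1\ot v^{2^{e-1}}$'') collapses the combinatorics immediately. A further subtlety is that the choice of $\psi$ depends on the parity of $a$, since this controls which pair relations \eqref{(3)} are nontrivial in $H^{m-1}(X)$.
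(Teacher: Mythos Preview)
Your plan is essentially the paper's own proof: expand in bidegree $(m,m-1)$, collapse via Theorem~\ref{relnsprop}(1) and Lucas/Vandermonde identities to obtain expressions like (\ref{expa})--(\ref{expc}), then detect nonvanishing with $\phi\ot\psi$ where $\phi$ is Poincar\'e duality (Proposition~\ref{phiprop}) and $\psi$ is a suitable homomorphism on $H^{m-1}(X)$. The only place to be more careful is the construction of $\psi$: rather than ``reducing the four relevant classes,'' you must produce a homomorphism defined consistently on all of $H^{m-1}(X)$, which the paper does by treating $\psi$ as a \emph{uniform} homomorphism (constant on each type $Y_W$) and solving the resulting linear system~(\ref{r11})--(\ref{r12}) together with the condition that (\ref{expa}), (\ref{expb}), or (\ref{expc}) map to~$1$.
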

As the powers of $R$ are essentially just place-keepers, we will usually omit writing them in the future. Also, we will often refer to $R$ itself as $V_0$.
By parts (1) and (2) of Theorem \ref{relnsprop}, both $H^m(X)$ and $H^{m-1}(X)$ are spanned by $V_i$, $0\le i\le a+b$, and $V_{i,j}$, $1\le i\le a$, $i<j\le a+b$.

All the classes $V_i$ with $i\le a$  play the same role in the relations (\ref{(3)}). The same is true of all the classes $V_i$ with $a<i\le a+b$.
The way that we will show a class $z$ in $H^{2m-1}(X\times X)$ is nonzero is by constructing a uniform homomorphism $\psi:H^{m-1}(X)\to\zt$ such that
$(\phi\ot\psi)(z)\ne0$, where $\phi:H^m(X)\to\zt$ is the Poincar\'e duality isomorphism. By {\it uniform homomorphism}, we mean one satisfying
\begin{itemize}
\item $\psi(V_i)=\psi(V_j)$ if $i,j\le a$ or if $a<i,j\le a+b$, and
\item $\psi(V_{i,j})=\psi(V_{i,k})$ if $j,k\le a$ or if $a<j,k\le a+b$.
\end{itemize}
(Here we are omitting writing the powers of $R$ accompanying the $V$'s.)

 We will let $Y_1$
refer to any $R^kV_i$ with $i\le a$, and $Y_2$ to any $R^kV_i$ with $a<i\le a+b$. Similarly, $Y_{1,1}$ denotes $R^kV_{i,j}$ with $i<j\le a$, while $Y_{1,2}$ is $R^kV_{i,j}$ with $i\le a<j\le a+b$.
If $\psi:H^{m-1}(X)\to \zt$  is a uniform homomorphism, then $\psi(Y_W)$ is a well-defined element of $\zt$ for each of the four possible subscripts  $W$.
We also let $w_1=V_i$ for $1\le i\le a$, and $w_2=V_i$ for $a<i\le a+b$. The difference between $w$ and $Y$ is that $w_i$ refers to the 1-dimensional class $V_i$, while $Y_S$ refers to $R^kV_S$ for an appropriate value of $k$.
Finally, an element of $[a+b]$ is of type 1 if it is $\le a$, and otherwise is of type 2.

The relations (\ref{(3)}) in $H^{m-1}(X)$ are of type $\br_{1,1}$ and $\br_{1,2}$, depending upon whether the subgee $S$ has both elements of type 1, or one element of each type. If $a=1$, there are no $\br_{1,1}$ relations since there are not two distinct positive integers $\le a$.
If $a>1$ and $\psi$ is a uniform homomorphism, then $\psi(\br_{1,1})$ is
\begin{equation}\label{r11}\psi(Y_0)+(a-2)\psi(Y_1)+b\psi(Y_2)+\tbinom{a-2}2\psi(Y_{1,1})+(a-2)b\psi(Y_{1,2})=0.\end{equation}
These coefficients count the number of relevant subsets $T$ in the sum. For example, in the last term, the number of subsets $T\subset[a+b]$ containing one type-1 element and one type-2 element which are disjoint from a given set $S$ which has two type-1 elements  is $(a-2)b$,
and $\psi$ sends each of them to the same element of $\zt$. Similarly, abbreviating $\psi(Y_W)$ as $\psi_W$,  $\psi(\br_{1,2})$ is the relation
\begin{equation}\label{r12}\psi_0+(a-1)\psi_1+(b-1)\psi_2+\tbinom{a-1}2\psi_{1,1}+(a-1)(b-1)\psi_{1,2}=0.\end{equation}

\begin{prop}\label{phiprop} Let $\phi:H^m(X)\to\zt$ be the Poincar\'e duality isomorphism, and let $\phi_W=\phi(Y_W)$. Then
\begin{eqnarray*}\phi_{1,1}=\phi_{1,2}&=&1\\
\phi_2&=&a-1\\
\phi_1&=&a+b\\
\phi_0&=&(a-1)b+\tbinom{a-1}2.\end{eqnarray*}\end{prop}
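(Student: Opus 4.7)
The strategy is to exploit Theorem \ref{relnsprop}(3) in top degree: after applying $\phi$, these ring relations become a short linear system for the five scalars $\phi_W$. I then solve the system and fix the remaining scale by non-triviality of $\phi$.

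\emph{Uniformity of $\phi$.} First I would justify that $\phi_W := \phi(Y_W)$ depends only on the type $W$. The subgee condition is invariant under arbitrary permutations of $\{1,\ldots,a\}$ and of $\{a+1,\ldots,a+b\}$, so $S_a\times S_b$ acts by ring automorphisms of $H^*(X)$, exchanging monomials within each type. Because $H^m(X)\cong\zt$ has no nontrivial automorphism, $\phi$ is fixed under this action, and so all $\phi(R^kV_S)$ with $S$ of a given type coincide.

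\emph{Four linear relations.} Apply Theorem \ref{relnsprop}(3) in $H^m(X)$. By uniformity, the subgees $S$ with $|S|\ge 1$ yield only four distinct equations, one per type of $S$. The two pair-type cases recover exactly (\ref{r11}) and (\ref{r12}) with $\phi$ in place of $\psi$, while the two singleton cases give
\begin{align*}
\phi_0+(a-1)\phi_1+b\phi_2+\tbinom{a-1}{2}\phi_{1,1}+(a-1)b\,\phi_{1,2}&=0,\\
\phi_0+a\phi_1+(b-1)\phi_2+\tbinom{a}{2}\phi_{1,1}+a(b-1)\phi_{1,2}&=0,
\end{align*}
obtained by counting disjoint subgees $T$ of each type exactly as in the derivations of (\ref{r11})--(\ref{r12}).

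\emph{Solving the system.} Working mod $2$ and using $\binom{a-1}{2}+\binom{a-2}{2}\equiv a$ and $\binom{a}{2}+\binom{a-1}{2}\equiv a-1$, I would: add the type-$1$ singleton relation to (\ref{r12}) to extract $\phi_2=(a-1)\phi_{1,2}$; add it instead to (\ref{r11}) to extract $\phi_1=a\phi_{1,1}+b\phi_{1,2}$; and add the two singleton relations and substitute the two previous equalities to force $\phi_{1,1}=\phi_{1,2}$. Then $\phi_1=(a+b)\phi_{1,2}$, and back-substituting into the type-$1$ singleton relation yields $\phi_0=\bigl((a-1)b+\tbinom{a-1}{2}\bigr)\phi_{1,2}$.

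\emph{Fixing the scale.} Every $\phi_W$ is now a specific multiple of $\phi_{1,2}$. If $\phi_{1,2}=0$ then all $\phi_W=0$, so $\phi$ would vanish on a spanning set of $H^m(X)$, contradicting Poincar\'e duality. Hence $\phi_{1,2}=1$, and the five stated identities follow. The boundary case $a=1$, in which $Y_{1,1}$ and relation (\ref{r11}) are absent, is handled by the same elimination using only the three surviving relations.

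\emph{Main obstacle.} The technical heart is the mod-$2$ binomial bookkeeping in the elimination; the uniformity step is easy but essential, as it collapses the otherwise unbounded family of subgee relations down to exactly four equations in five unknowns.
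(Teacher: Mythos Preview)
Your proof is correct and follows essentially the same approach as the paper's own proof: establish uniformity of $\phi$ by symmetry, write down the four relations $\br_1$, $\br_2$, $\br_{1,1}$, $\br_{1,2}$ coming from (\ref{(3)}) in top degree, solve the resulting homogeneous linear system over $\zt$, and use non-triviality of the Poincar\'e duality isomorphism to pin down the nonzero solution. The paper simply asserts that ``one can check by row-reduction or substitution'' that the stated values solve the system; you have actually carried out that elimination explicitly, and your justification of uniformity via the $S_a\times S_b$ action is a bit more careful than the paper's one-word ``by symmetry.''
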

\begin{proof} By symmetry, $\phi$ is uniform. Using the notation introduced above, there are relations in $H^m(X)$ of the form $\br_1$ and $\br_2$ satisfying, respectively,
\begin{eqnarray*}\phi_0+(a-1)\phi_1+b\phi_2+\tbinom{a-1}2\phi_{1,1}+(a-1)b\phi_{1,2}&=&0\\
\phi_0+a\phi_1+(b-1)\phi_2+\tbinom a2\phi_{1,1}+a(b-1)\phi_{1,2}&=&0,\end{eqnarray*}
as well as relations $\br_{1,1}$ and $\br_{1,2}$ like (\ref{r11}) and (\ref{r12}), but with $\psi$ replaced by $\phi$.
As one can check by row-reduction or substitution, the nonzero solution of these four equations (mod 2) is the one stated in the proposition.\end{proof}

Next we expand the expressions in Theorem \ref{cohthm}. The parity of $a$ is not an issue in these expansions.
The expression in part (a) expands, in bidegree $(m,m-1)$, as (in our new notation)
\begin{eqnarray*}&&\sum_{i=1}^{2m-2-2^e}\tbinom{2m-1-2^e}i\tbinom{2^e-1}{m-i-1}w_1^iw_2R^{m-i-1}\otimes w_1^{2m-1-2^e-i}R^{2^e-m+i}\\
&&+w_2R^{m-1}\otimes w_1^{2m-1-2^e}R^{2^e-m}+w_1^{2m-1-2^e}w_2R^{2^e-m}\otimes R^{m-1}\\
&&+\sum_{i=1}^{2m-2-2^e}\tbinom{2m-1-2^e}i\tbinom{2^e-1}{m-i}w_1^iR^{m-i}\otimes w_1^{2m-1-2^e-i}w_2R^{2^e-m+i-1}\\
&&+\tbinom{2^e-1}mR^m\otimes w_1^{2m-1-2^e}w_2R^{2^e-m-1}+\tbinom{2^e-1}{2^e+1-m}w_1^{2m-1-2^e}R^{2^e+1-m}\otimes w_2R^{m-2}.\end{eqnarray*}
The first line is $\dsum_{i=1}^{2m-2-2^e}\tbinom{2m-1-2^e}i\tbinom{2^e-1}{m-i-1}$ times $Y_{1,2}\otimes Y_1$. This sum is easily seen to be 0 mod 2.
Similarly the sum on the third line is $\equiv0$. We obtain that the expansion in part (a) is, in bidegree $(m,m-1)$,
\begin{equation}\label{expa} Y_2\otimes Y_1+Y_{1,2}\otimes R^{m-1}+(1+\delta_{m,2^e})R^m\otimes Y_{1,2}+Y_1\otimes Y_2.\end{equation}
We frequently use Lucas's Theorem for evaluation of mod 2 binomial coefficients. For example, here we use that $\binom{2^e-1}i\equiv1$ for all nonnegative $i\le 2^e-1$.

Part (b) expands, in bidegree $(m,m-1)$, as
\begin{eqnarray*}&&\sum_{i=1}^{m-2}\tbinom{m-1}i\tbinom{m-2}{m-i-2}w_1^iw_2^2R^{m-i-2}\otimes w_1^{m-1-i}R^i\\
&&+\sum_{i=2}^{m-2}\tbinom{m-1}i\tbinom{m-2}{m-i}w_1^iR^{m-i}\otimes w_1^{m-1-i}w_2^2R^{i-2}\\
&&+w_2^2R^{m-2}\otimes w_1^{m-1}+mw_1^{m-1}R\times w_2^2R^{m-3}.\end{eqnarray*}
For $m\ne 2^{e-1}+1$, $$\sum_{i=1}^{m-2}\tbinom{m-1}i\tbinom{m-2}{m-i-2}\equiv\tbinom{2m-3}{m-2}+1\equiv1$$ and $$\sum_{i=2}^{m-2}\tbinom{m-1}i\tbinom{m-2}{m-i}\equiv\tbinom{2m-3}m+m\equiv\delta_{m,2^e}+m,$$ and so, similarly to (\ref{sum}), the expansion equals
\begin{equation}\label{expb}Y_{1,2}\otimes Y_1+(\delta_{m,2^e}+m)Y_1\otimes Y_{1,2}+Y_2\otimes Y_1+mY_{1}\otimes Y_2.\end{equation}

The expansion of part (c) of Theorem \ref{cohthm} is easier. It equals, in bidegree $(m,m-1)$,
\begin{equation}\label{expc} w_1w_2^{2^{e-1}}\ot w_1^{2^{e-1}}+w_1^{2^{e-1}+1}\ot w_2^{2^{e-1}}=Y_{1,2}\otimes Y_1+Y_1\otimes Y_2.\end{equation}

Now we show, one-at-a-time, that there are uniform homomorphisms $\psi$ such that $\phi\ot\psi$ sends (\ref{expa}), (\ref{expb}), and (\ref{expc}) to 1.

If $\psi:H^{m-1}(X)\to\zt$ is a uniform homomorphism,
applying $\phi\ot\psi$ to (\ref{expa}) with $a$ even yields, using Proposition \ref{phiprop},
\begin{equation}\label{expr}\psi(Y_1)+\psi(Y_0)+\eps_1\psi(Y_{1,2})+b\psi(Y_2).\end{equation}
Here and in the following, $\eps_t$ denotes an element of $\zt$ whose value turns out to be irrelevant. To have (\ref{expr}) be nonzero, we need a uniform homomorphism $\psi$ satisfying the system with the following augmented matrix. The columns represent
$\psi(Y_0)$, $\psi(Y_1)$, $\psi(Y_2)$, $\psi(Y_{1,1})$, and $\psi(Y_{1,2})$, respectively, and the second and third rows are (\ref{r11}) and (\ref{r12}).
$$\left[\begin{array}{ccccc|c}1&1&b&0&\eps_1&1\\
1&0&b&\eps_2&0&0\\
1&1&b-1&\eps_2&b-1&0\end{array}\right]$$
Here we have noted that since $a$ is even, $\binom{a-2}2\equiv\binom{a-1}2$ mod 2.
This system is easily seen to have a solution, proving part (a) of Theorem \ref{cohthm}.

Applying $\phi\ot\psi$ to (\ref{expb}) with $a$ odd and $a>1$ similarly yields
$$\psi(Y_1)+\eps_3\psi(Y_{1,2})+m(b+1)\psi(Y_2).$$
Now $\psi$ must satisfy the following system, which is also easily seen to have a solution.
$$\left[\begin{array}{ccccc|c}0&1&m(b+1)&0&\eps_3&1\\
1&1&b&\eps_4&b&0\\
1&0&b-1&\eps_4+1&0&0\end{array}\right]$$
Here we have used that if $a$ is odd, then $\binom{a-1}2\equiv\binom{a-2}2+1$.
If $a=1$, then the fourth column and second row are removed, and again there is a solution.

Finally, applying $\phi\ot\psi$ to (\ref{expc}) with $a$ odd leads to the following system for $\psi$, which again has a solution.
$$\left[\begin{array}{ccccc|c}0&1&b+1&0&0&1\\
1&1&b&\eps_4&b&0\\
1&0&b-1&\eps_4+1&0&0\end{array}\right]$$
Again, the fourth column and second row are omitted if $a=1$, but there is still a solution.
This completes the proof of Theorem \ref{cohthm}.

\section{Proof of \ref{mainthm} for Type 2 cases}
In this section we prove Theorem \ref{mainthm} in the 27 cases which are marked $T_2$ in Table \ref{gT}. This is easier than most of what we are doing because in each case we are dealing with a single set of gees, rather than one with parameters, such as the $a$ and $b$ in Theorem \ref{cohthm}.
So subscripts refer to actual $V$, rather than to a range of $V$'s.

\begin{thm}\label{norange}If $X=\Mbar(\ell)$, where $\ell=(\ell_1,\ldots,\ell_n)$, has gees as listed in the first half of any of the rows of Table \ref{Tt}, and $m=n-3\ge4$, then the component of
\begin{equation}\label{N1}\ol{V_1}^{m-1}\ \ol{V_2}^2\ \ol{V_3}\ \Rbar^{m-3}\ne0\in H^m(X)\otimes H^{m-1}(X),\end{equation}
if $m-1$ is not a 2-power or it is  the final case in the table. If $m-1$ is a 2-power and it is not the last case of Table \ref{Tt}, then \begin{equation}\label{N2}\ol{V_1}^{m}\ \ol{V_2}^2\ \ol{V_3}\ \Rbar^{m-4}\ne0.\end{equation}
\end{thm}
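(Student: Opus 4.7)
The plan is to adapt the strategy of Theorems \ref{first} and \ref{cohthm}: expand the product $\ol{V_1}^{m-1}\ol{V_2}^2\ol{V_3}\Rbar^{m-3}$ (respectively $\ol{V_1}^m\ol{V_2}^2\ol{V_3}\Rbar^{m-4}$ in the 2-power regime) across the K\"unneth tensor, project onto bidegree $(m,m-1)$, reduce using the ring presentation of Theorem \ref{relnsprop}, and then exhibit a homomorphism $\psi\colon H^{m-1}(X)\to\zt$ such that $(\phi\ot\psi)$ applied to the resulting class is $1$, where $\phi$ is Poincar\'e duality. As the preamble to this section emphasizes, the 27 Type-2 cases have fixed gees with no parameters, so no uniformity condition need be imposed on $\psi$; each case reduces to a finite linear-algebra problem over $\zt$ in the finitely many values $\psi(R^{m-1-t}V_{j_1}\cdots V_{j_t})$ for $t\le 2$.

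The expansion simplifies quickly. Mod $2$ one has $\ol{V_2}^2=V_2^2\ot 1+1\ot V_2^2$, and $\ol{V_3}$ contributes two terms, so the full product breaks into four blocks according to which tensor factor carries $V_2^2$ and which carries $V_3$. Within each block, the remaining binomial-coefficient weightings from $\ol{V_1}^{m-1}$ and $\Rbar^{m-3}$ sum, by Vandermonde, to a single binomial of the form $\binom{2m-4}{\star}$, whose parity is detected by Lucas's theorem. This is precisely where the split in the theorem arises: when $m-1$ is a $2$-power, several of these coefficients vanish mod $2$ and the class (\ref{N1}) collapses to too few basic tensors for $\phi\ot\psi$ to detect; passing to (\ref{N2}) bumps the exponent on $\ol{V_1}$ by one, shifting the binomials just enough to restore a usable expansion.

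After simplification, the relevant class is a $\zt$-linear combination of a bounded number of basic tensors $R^aV_S\ot R^bV_T$ with $|S|,|T|\le 3$. The values $\phi(R^{m-t}V_{i_1}\cdots V_{i_t})$ are pinned down, once and for all in each of the 27 cases, by solving the $|S|=1$ instances of (\ref{(3)}) in $H^m(X)$, just as in Proposition \ref{phiprop}. The constraints on $\psi$ come from the $|S|=2$ instances of (\ref{(3)}); one then needs, in each case, a $\psi$ in the kernel of these constraints on which $(\phi\ot\psi)$ of the expansion evaluates to $1$. The main obstacle will be that this amounts to a per-case verification across 27 gee patterns (organized by Table \ref{Tt}), with a further bifurcation according to whether $m-1$ is a 2-power; the lone anomalous ``final case'' in the statement must be the single pattern for which (\ref{N1}) remains non-zero even at $2$-power $m-1$, an exception that should emerge automatically from the Lucas-theorem bookkeeping once the relevant $\binom{2m-4}{\star}$'s are tabulated against the $\phi$-values for that pattern.
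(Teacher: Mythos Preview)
Your approach is the paper's approach: expand, apply $\phi\otimes\psi$, and verify case by case (the paper does this with {\tt Maple}, recording the successful $\psi$'s in the right-hand column of Table \ref{Tt}). Two corrections are needed.

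First, the relations (\ref{(3)}) determining $\phi$ on $H^m$ come from all subgees with $|S|\ge 1$, not just $|S|=1$; likewise the constraints on $\psi$ in $H^{m-1}$ come from all subgees with $|S|\ge 2$, not just $|S|=2$. Since the gees in Table \ref{Tt} have size up to $3$, monomials $V_S$ with $|S|=3$ do occur, and in particular $\psi_{1,2,3}$ enters the expansion (see (\ref{ee1})); your bound $t\le 2$ is off by one.

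Second, and more substantively, your diagnosis of the final (27th) case is wrong. The Lucas/binomial bookkeeping is \emph{identical} across all 27 rows, since the product (\ref{N1}) is the same in every case; the bifurcation at $m-1=2^e$ is uniform. What singles out the last row (gees $43,\ 52,\ 61$) is a ring-structure fact: $\{1,2,3\}$ is not a subgee there (none of $43$, $52$, $61$ dominates $321$), so $V_1V_2V_3=0$ and hence $\phi_{1,2,3}=0$. In the first 26 cases one has $\phi_{1,2,3}=1$ and $\phi_{2,3}=0$, so when $m-1$ is a $2$-power the extra term $\phi_{1,2,3}\psi_1$ in (\ref{ee1}) cancels the main contribution, forcing the passage to (\ref{N2}). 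In the 27th case the roles reverse ($\phi_{1,2,3}=0$, $\phi_{2,3}=1$), the extra term vanishes, and (\ref{N1}) survives via $\phi_{2,3}\psi_1$ regardless of $m$. The exception is driven by which monomials vanish in $H^*(X)$, not by binomial parities.
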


In the following table, we list the cases in which $\psi_S=\psi(Y_S)=1$ for the homomorphism $\psi:H^{m-1}(X)\to\zt$ which we use.

\begin{tab}\label{Tt}

\begin{center}
{\scalefont{1}{
$$\renewcommand\arraystretch{1.0}\begin{array}{l|l}
\text{gees}&\psi_S\text{ which equal 1}\\
\hline
321,\ 42&\psi_1,\psi_{1,3},\psi_{1,4}\\
321,\ 42,\ 51&\psi_1,\psi_{1,3},\psi_{1,4}\\
321,\ 42,\  61&\psi_1,\psi_{1,3},\psi_{1,4}\\
321,\ 43&\psi_1,\psi_{1,2},\psi_{1,3},\psi_{1,4}\\
321,\ 43,\ 51&\psi_1,\psi_{1,2},\psi_{1,3},\psi_{1,4}\\
321,\ 43,\ 52&\psi_0,\psi_1,\psi_2,\psi_5,\psi_{1,2},\psi_{1,3},\psi_{1,4},\psi_{2,5}\\
321,\ 43,\ 52,\ 61&\psi_0,\psi_1,\psi_2,\psi_5,\psi_{1,2},\psi_{1,3},\psi_{1,4},\psi_{2,5}\\
321,\ 43,\ 61&\psi_1,\psi_{1,2},\psi_{1,3},\psi_{1,4}\\
321,\ 43,\ 62&\psi_1,\psi_{1,2},\psi_{1,3},\psi_{1,4},\psi_{1,5},\psi_{1,6}\\
321,\ 52&\psi_0,\psi_1,\psi_2,\psi_5,\psi_{1,3},\psi_{1,4},\psi_{2,5}\\
321,\ 52,\ 61&\psi_0,\psi_1,\psi_2,\psi_5,\psi_{1,3},\psi_{1,4},\psi_{2,5}\\
321,\ 53&\psi_1,\psi_2,\psi_3,\psi_{1,4},\psi_{2,4},\psi_{3,4}\\
321,\ 53,\ 61&\psi_1,\psi_2,\psi_3,\psi_{1,4},\psi_{2,4},\psi_{3,4}\\
321,\ 53,\ 62&\psi_1,\psi_2,\psi_3,\psi_{1,4},\psi_{2,4},\psi_{3,4}\\
321,\ 62&\psi_0,\psi_1,\psi_2,\psi_4,\psi_{1,4},\psi_{2,4},\psi_{3,4}\\
321,\ 63&\psi_1,\psi_2,\psi_3,\psi_{1,4},\psi_{2,4},\psi_{3,4}\\
421,\ 43&\psi_1,\psi_{1,2},\psi_{1,3},\psi_{1,4}\\
421,\ 43,\ 51&\psi_1,\psi_{1,2},\psi_{1,3},\psi_{1,4}\\
421,\ 43,\ 52&\psi_0,\psi_1,\psi_2,\psi_5,\psi_{1,2},\psi_{1,3},\psi_{1,4},\psi_{2,5}\\
421,\ 43,\ 52,\ 61&\psi_0,\psi_1,\psi_2,\psi_5,\psi_{1,2},\psi_{1,3},\psi_{1,4},\psi_{2,5}\\
421,\ 43,\ 61&\psi_1,\psi_{1,2},\psi_{1,3},\psi_{1,4}\\
421,\ 43,\ 62&\psi_1,\psi_{1,2},\psi_{1,3},\psi_{1,4},\psi_{1,5},\psi_{1,6}\\
421,\ 52&\psi_0,\psi_1,\psi_2,\psi_5,\psi_{1,2},\psi_{1,3},\psi_{1,4},\psi_{2,5}\\
421,\ 52,\ 61&\psi_0,\psi_1,\psi_2,\psi_5,\psi_{1,3},\psi_{1,4},\psi_{2,5}\\
421,\ 62&\psi_0,\psi_1,\psi_2,\psi_5,\psi_{1,5},\psi_{2,5}\\
521,\ 62&\psi_1,\psi_{1,3},\psi_{1,4},\psi_{1,5},\psi_{1,6}\\
43,\ 52,\ 61&\psi_1,\psi_{1,6}
\end{array}$$}}
\end{center}
\end{tab}

Using Proposition \ref{21prop}, $\phi\ot\psi$ applied to the product in (\ref{N1}) equals
\begin{eqnarray}\label{ee1}&&(\phi_{2,3}+\phi_{1,2,3})\psi_1+\phi_{1,3}(\psi_2+\psi_{1,2})\\
\nonumber &+&(m-1)\phi_1(\psi_{2,3}+\psi_{1,2,3})+\begin{cases}\phi_1\psi_{1,2,3}&\text{if $m$ is a 2-power}\\
\phi_{1,2,3}\psi_1+\phi_{1,3}\psi_{1,2}&\text{if $m-1$ is a 2-power}\\
0&\text{otherwise.}\end{cases}\end{eqnarray}
 When $m-1$ is a 2-power, $\phi\ot\psi$ applied to (\ref{N2}) equals
\begin{equation}\label{ee2}\phi_1(\psi_{2,3}+\psi_{1,2,3})+\phi_{1,2,3}\psi_1+\phi_{1,3}\psi_{1,2}.\end{equation}

The homomorphism $\phi:H^m(X)\to\zt$ in each of the 27 cases is easily determined by {\tt Maple}. We define a matrix  whose columns (representing monomials) correspond to all the subgees, including 0 (the empty set), and whose rows (representing relations (\ref{(3)})) correspond to all subgees except 0. An entry  in the matrix is 1 iff the  subgees for its row and column labels are disjoint. We row reduce and read off the unique nonzero solution. There is a nice pattern for all $\phi_S$ when $|S|\ge2$, but we have not discerned a pattern for $\phi_i$. What is relevant for the first 26 cases is that we always have $\phi_{1,2,3}=1$, while $\phi_{1,3}=\phi_{2,3}=0$.

Each case admits several homomorphisms $\psi:H^{m-1}\to\zt$. They must satisfy a system of homogeneous equations whose matrix is like the one for $\phi$ except that the rows correspond only to subgees with more than one element. In Table \ref{Tt}, we have listed, for each case, one such $\psi$ that works. They all satisfy that $\psi_1=1$ and $\psi_{2,3}=\psi_{1,2,3}=0$. We see that  (\ref{ee1})  equals 1 for the first 26 cases if $m-1$ is not a 2-power, as does (\ref{ee2}) if $m-1$ is a 2-power.

For the final (27th) case in the table, there is a homomorphism $\psi$ for which the only nonzero values are $\psi_1$ and $\psi_{1,6}$. Also $\phi_{i,j}=1$ for all $\{i,j\}$. Of course, $\phi_{1,2,3}=0$, since $V_{1,2,3}=0$ in this case. Thus we
obtain that (\ref{ee1}) is nonzero, due to $\phi_{2,3}\psi_1$.

The {\tt Maple} program that computed $\phi$ and verified $\psi$ for these 27 cases can be viewed at \cite{27}.

\section{Proof of \ref{mainthm} for genetic codes with a single gene of size 4}\label{4sec}
In this section,  $X$ denotes $\Mbar(\lbar)$ where the genetic code of $\lbar$ is $\la\{n,a+b+c,a+b,a\}\ra$ with $n>a+b+c>a+b>a>0$.  We prove (\ref{goal2}) for $X$  by an analysis  similar to that for $\la\{n,a+b,a\}\ra$ in Theorem \ref{cohthm}, except that there are many more cases to consider.

We adopt notation similar to that of the proof of Theorem \ref{cohthm}, using $w_1$, $w_2$, and $w_3$ for $V_i$ with $i$ in $[1,a]$, $(a,a+b]$, and $(a+b,a+b+c]$, respectively.
Similarly, $Y_S$ for a multiset $S$ containing elements of $\{1,2,3\}$ refers to monomials whose $V_i$'s have the types of the elements of $S$.

We begin by proving, similarly to Proposition \ref{phiprop},
\begin{thm}\label{fourthm}  Let $\phi:H^m(X)\to\zt$ be the Poincar\'e duality isomorphism, and let $\phi_W=\phi(Y_W)$. Then
\begin{eqnarray*}\phi_{1,1,1}=\phi_{1,1,2}=\phi_{1,1,3}&=&1\\
\phi_{1,2,2}=\phi_{1,2,3}&=&1\\
\phi_{2,2}=\phi_{2,3}&=&a-1\\
\phi_{1,3}&=&a+b\\
\phi_{1,1}=\phi_{1,2}&=&a+b+c-1\\
\phi_3&=&(a-1)(b-1)+\tbinom a2\\
\phi_2&=&(a-1)(b+c)+\tbinom a2\\
\phi_1&=&(a-1)(a+b+c-1)+\tbinom{a-1}2+\tbinom b2+(b-1)(c-1)\\
\phi_0&=&\tbinom a2(a+b+c-1)+(a-1)(\tbinom b2+(b-1)(c-1)).
\end{eqnarray*}
\end{thm}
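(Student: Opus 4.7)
My plan is to mirror the proof of Proposition \ref{phiprop}, now adapted to a single gene of size three. Since the genetic code $\la\{n, a+b+c, a+b, a\}\ra$ is fully symmetric under permutations within each of the three type ranges $[1,a]$, $(a, a+b]$, and $(a+b, a+b+c]$, the same symmetry argument as in Proposition \ref{phiprop} shows that $\phi$ is uniform: $\phi(Y_W)$ depends only on the type-profile multiset $W$ of the subgee underlying $Y_W$. Enumerating the valid subgee profiles for the gee $\{a, a+b, a+b+c\}$ produces exactly $14$ unknowns: $\phi_0$, three of size one, five of size two (namely $\phi_{1,1}$, $\phi_{1,2}$, $\phi_{1,3}$, $\phi_{2,2}$, $\phi_{2,3}$), and five of size three (namely $\phi_{1,1,1}$, $\phi_{1,1,2}$, $\phi_{1,1,3}$, $\phi_{1,2,2}$, $\phi_{1,2,3}$).

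For each of the $13$ nonempty profiles $W$, applying (\ref{(3)}) in $H^m(X)$ to a fixed profile-$W$ representative $S$ and then using uniformity gives a single mod-$2$ linear equation
$$\sum_{W'} N(W, W')\,\phi_{W'} \equiv 0,$$
where $N(W, W')$ counts the profile-$W'$ subgees disjoint from $S$. Each such count is a straightforward product of three binomial coefficients in the residual numbers of type-$1$, type-$2$, and type-$3$ elements still available; for example, for $W = (1,2,3)$ the coefficient on $\phi_{1,2,3}$ is $(a-1)(b-1)(c-1)$ and the coefficient on $\phi_{1,1,2}$ is $\tbinom{a-1}{2}(b-1)$. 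The other entries follow the same pattern and can be read off the profile pairs directly.

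Assembling the resulting $13 \times 14$ system over $\zt$ gives a one-dimensional solution space by Poincar\'e duality, and the values asserted in the theorem should emerge as the unique nonzero solution, exactly as in Proposition \ref{phiprop}. To extract them I would work top-down: differences of the five size-$3$ relations coming from subgees that differ by a single-element swap between types isolate the size-$3$ unknowns, and after applying Lucas's theorem and parity identities such as $\tbinom{a-1}{2} \equiv \tbinom{a-2}{2} + (a-2) \pmod 2$, these should collapse to $\phi_W = 1$ for each $|W| = 3$. Back-substituting into the size-$2$ relations then delivers $\phi_{2,2} = \phi_{2,3} = a-1$, $\phi_{1,3} = a+b$, and $\phi_{1,1} = \phi_{1,2} = a+b+c-1$; the three size-$1$ relations pin down $\phi_1, \phi_2, \phi_3$; and a single remaining relation fixes $\phi_0$.

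The principal obstacle is bookkeeping rather than conception: thirteen equations in fourteen unknowns with binomial-polynomial coefficients in $a, b, c$ make an error-free mod-$2$ row reduction delicate. To keep the computation under control I would proceed size-by-size so that each new block of equations introduces one fresh layer of unknowns, using Lucas's theorem and basic Pascal identities throughout. In principle the entire calculation could also be verified symbolically, in the spirit of the {\tt Maple} computations referenced in Theorem \ref{norange}.
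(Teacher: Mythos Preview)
Your proposal is correct and follows essentially the same approach as the paper: set up the $13\times 14$ homogeneous system over $\zt$ with coefficients $\tbinom{a-u_1}{u_1'}\tbinom{b-u_2}{u_2'}\tbinom{c-u_3}{u_3'}$, then row-reduce (or verify by {\tt Maple}) to extract the unique nonzero solution. The only point the paper adds that you do not mention explicitly is the edge-case caveat when $a\le 2$ or $b=1$, where some relations $\br_U$ are absent but the corresponding coefficients are also not needed, so the stated solution still holds.
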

\begin{proof} Let $\cS=$
$$\{\emptyset,(1),(2),(3),(1,1),(1,2),(1,3),(2,2),(2,3),(1,1,1),(1,1,2),(1,1,3),(1,2,2),(1,2,3)\}$$ denote the set of types of elements that can be subgees. For example, $(1,2)$ refers to a subgee $\{i,j\}$ with $i\le a$ and $a<j\le a+b$.
For $U\in \cS$ and $1\le i\le 3$, let $u_i$ denote the number of $i$'s in $U$. For example, if $U=(1,1,2)$, then $u_1=2$, $u_2=1$, and $u_3=0$. For each $U\in\cS-\emptyset$, there is a relation $\br_U$ of type (\ref{(3)}), and $\phi(\br_U)$ is
\begin{equation}\label{Ueq}\sum_{U'\in\cS}\tbinom{a-u_1}{u_1'}\tbinom{b-u_2}{u_2'}\tbinom{c-u_3}{u_3'}\phi_{U'}=0.\end{equation}
For example, (\ref{r12}) is of this form, including only $a$ and $b$ (not $c$) and corresponding to $U=(1,2)$, and with $\psi$ instead of $\phi$.
The set of all equations (\ref{Ueq}) is a system of 13 homogeneous equations over $\zt$ in 14 unknowns $\phi_{U'}$, and its nonzero solution is the one stated in the theorem.

This solution was found manually by row reduction, and then checked by {\tt Maple}, noting that the system only depends on $a$ mod 4, $b$ mod 4, and $c$ mod 2. The program verified that the solution worked in all 32 cases.

There are special considerations when $a=1$ or 2, or $b=1$. For example, if $b=1$, then $\tbinom{b-2}1$ and $\tbinom{b-2}2$ should be 0 for us, but are 1 in most binomial coefficient formulas. But the relations $\br_U$ in which such coefficients appear are not present, and so the equations (\ref{Ueq}) for these $U$ need not be considered.
\end{proof}

Let \begin{equation}\label{cs'}\cS'=\{(1,1),(1,2),(1,3),(2,2),(2,3),(1,1,1),(1,1,2),(1,1,3),(1,2,2),(1,2,3)\}\end{equation}
correspond to the types of gees of size $\ge2$.
For $\psi:H^{m-1}(X)\to\zt$ to be a uniform homomorphism with $\psi_W=\psi(Y_W)$, the conditions that must be satisfied are, for $U\in\cS'$,
\begin{equation}\label{Upeq}\sum_{U'\in\cS}\tbinom{a-u_1}{u_1'}\tbinom{b-u_2}{u_2'}\tbinom{c-u_3}{u_3'}\psi_{U'}=0.\end{equation}

 To prove (\ref{goal2}), we seek $\psi:H^{m-1}(X)\to\zt$ satisfying (\ref{Upeq}) for all $U\in\cS'$, and $\{v_1,\ldots,v_{2m-1}\}$ such that \begin{equation}\label{Dwi}(\phi\ot\psi)(\ol{v_1}\cdots\ol{v_{2m-1}})=1.\end{equation}
  The classes $v_i\in H^1(X)$ that we will use depend on the mod 4 values of $a$ and $b$, and $c$ mod 2. First we deal with two cases that turn out to be exceptional.
The following easily-verified lemma will be useful.
\begin{lem} $\binom A2+AB+AC+BC+\binom B2\equiv0$ mod 2 iff $A+B\equiv0\mod4$ or $A+B+2C\equiv1\mod4$.\label{techlem}\end{lem}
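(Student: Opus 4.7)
The plan is to reduce the five-term expression to something involving only $A+B$ and $C$, then split on the parity of $C$. The central observation is the classical identity
\[\binom{A}{2}+AB+\binom{B}{2}=\binom{A+B}{2},\]
which lets me absorb three of the five terms. Writing $N=A+B$, the remaining contribution $AC+BC$ becomes $NC$, so the whole expression simplifies to $\binom{N}{2}+NC$, and the question becomes: when is $\binom{N}{2}+NC\equiv 0\pmod 2$?

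Next I would recall that $\binom{N}{2}\bmod 2$ depends only on $N\bmod 4$, being $0$ for $N\equiv 0,1$ and $1$ for $N\equiv 2,3$. This gives a clean case split on the parity of $C$. If $C$ is even, then $NC\equiv 0$, so the condition collapses to $N\equiv 0$ or $1\pmod 4$, i.e.\ $A+B\equiv 0\pmod 4$ or $A+B\equiv 1\pmod 4$. If $C$ is odd, then $NC\equiv N\pmod 2$, and one can use a second small identity
\[\binom{N}{2}+N=\binom{N+1}{2}\]
to rewrite the condition as $\binom{N+1}{2}\equiv 0\pmod 2$, i.e.\ $N\equiv 0$ or $3\pmod 4$.

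Finally I would translate these four mod-$4$ outcomes into the form stated in the lemma. Since $2C\equiv 0\pmod 4$ when $C$ is even, the congruence $A+B+2C\equiv 1\pmod 4$ reads $A+B\equiv 1\pmod 4$ in that case; and since $2C\equiv 2\pmod 4$ when $C$ is odd, it reads $A+B\equiv 3\pmod 4$ in that case. Combining with the alternative $A+B\equiv 0\pmod 4$ (which is automatic in both lists) recovers exactly the disjunction ``$A+B\equiv 0\pmod 4$ or $A+B+2C\equiv 1\pmod 4$.'' There is no real obstacle here; the only thing to be careful about is keeping straight that the disjunction in the statement is uniform in $C$, which is why the $2C$ correction is the natural way to encode the two sub-cases in a single clause.
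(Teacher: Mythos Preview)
Your proof is correct. The paper itself gives no proof, merely calling the lemma ``easily-verified,'' so there is nothing to compare against; your reduction via $\binom{A}{2}+AB+\binom{B}{2}=\binom{A+B}{2}$ followed by the case split on the parity of $C$ is a clean way to carry out that verification.
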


The following result is our first verification of (\ref{Dwi}). The last two exceptions in Theorem \ref{mainthm} are due to the requirement here that $m>4$ (to make $m-6+\eps\ge0$).
\begin{prop}\label{prop111} Suppose $m>4$, $a\equiv b\equiv1\mod 4$, and $c\equiv1\mod 2$. The isomorphism $\phi:H^m(X)\to\zt$ sends each $Y_{i,j,k}$ to $1$ and other monomials to 0.
There exists a uniform homomorphism $\psi:H^{m-1}(X)\to\zt$ sending each $Y_{i,j}$ to 1 and other monomials to 0. Let $\eps=1$ if $m-2$ is a 2-power, and $\eps=2$ otherwise. Then
$$(\phi\otimes\psi)(\ol{w_1}^{m-\eps}\ \ol{w_2}^2\ \ol{w_3}^3\ \Rbar^{m-6+\eps})=1.$$
\end{prop}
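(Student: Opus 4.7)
My plan is to verify the three claims of the proposition in sequence. The first two---the evaluations of $\phi$ and $\psi$---reduce to routine mod 2 arithmetic using Theorem \ref{fourthm} and Lemma \ref{techlem}. The real work is in the third claim, the identity
$$(\phi\ot\psi)(\ol{w_1}^{m-\eps}\ \ol{w_2}^2\ \ol{w_3}^3\ \Rbar^{m-6+\eps})=1,$$
which I would establish by expanding, applying Vandermonde's identity, and then using Lucas's theorem on the resulting binomial coefficient.

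For $\phi$, I substitute $a\equiv b\equiv 1$ mod 4 and $c\equiv 1$ mod 2 into the formulas of Theorem \ref{fourthm}. Under these hypotheses $a-1$ and $b-1$ are divisible by 4, which forces the expressions $a-1$, $b-1$, $a+b$, $a+b+c-1$, $\binom{a}{2}$, $\binom{b}{2}$, and $\binom{a-1}{2}$ each to be even mod 2. This kills every $\phi_W$ with $|W|<3$, while the five triple-type values $\phi_{i,j,k}$ are already stated to be 1 in Theorem \ref{fourthm}. For $\psi$, I verify relation (\ref{Upeq}) for each $U\in\cS'$. Since the stated $\psi$ is supported on pair-type subgees, the sum collapses to $\binom{A}{2}+AB+AC+BC+\binom{B}{2}$ with $(A,B,C)=(a-u_1,b-u_2,c-u_3)$, and Lemma \ref{techlem} reduces this to checking $A+B\equiv 0$ mod 4, or $A+B+2C\equiv 1$ mod 4, in each case. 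A short case analysis over the ten elements of $\cS'$ confirms that one of these holds each time (for instance, $U=(1,1,1)$ gives $A+B\equiv 3$ and $2C\equiv 2$ mod 4, so $A+B+2C\equiv 1$).

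For the third claim, I expand in bidegree $(m,m-1)$ by parameterizing a left-factor monomial by exponents $(i_1,i_2,i_3,i_R)$ with sum $m$. Only $i_2\in\{0,2\}$ contributes since $\binom{2}{1}$ is even. For $\phi(Y_L)=1$ the left factor must contain all three types, forcing $i_2=2$ and $i_1,i_3\ge 1$. For $\psi(Y_R)=1$ the right factor must contain exactly two distinct types; since $i_2=2$ uses up all of $w_2$, type 2 is absent on the right, which forces both types 1 and 3 to appear there, i.e., $1\le i_1\le m-\eps-1$ and $1\le i_3\le 2$. Using $\binom{3}{1}\equiv\binom{3}{2}\equiv 1$ mod 2, the surviving contribution is
$$\sum_{i_3=1}^{2}\sum_{i_1}\binom{m-\eps}{i_1}\binom{m-6+\eps}{m-2-i_3-i_1}.$$
Extending the inner sum to all $i_1$ and applying Vandermonde produces $\binom{2m-6}{m-2-i_3}$, with a single boundary contribution at $(i_1,i_3)=(0,2)$ surviving when $\eps=2$. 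Summing over $i_3$ and applying Pascal's identity collapses the expression to $\binom{2m-5}{m-3}$ when $\eps=1$ and to $\binom{2m-5}{m-3}+1$ when $\eps=2$. The classical identity (provable by Lucas) that $\binom{2N+1}{N}$ is odd iff $N+1$ is a 2-power, applied with $N=m-3$, says $\binom{2m-5}{m-3}$ is odd precisely when $m-2$ is a 2-power. Thus when $\eps=1$ the sum equals $\binom{2m-5}{m-3}\equiv 1$, and when $\eps=2$ it equals $0+1=1$, giving the value 1 in both cases.

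The main obstacle I foresee is the bookkeeping in this third step: identifying precisely which exponent patterns survive both $\phi$ and $\psi$, and tracking the single boundary contribution in the Vandermonde reduction that toggles the answer by $1$ between the two values of $\eps$. Once the sum is reduced to $\binom{2m-5}{m-3}$ plus the correction, the parity analysis via Lucas is standard. The hypothesis $m>4$ is needed to ensure $m-6+\eps\ge 0$, so that the factor $\Rbar^{m-6+\eps}$ is well-defined; this is exactly why the proposition does not cover the $n=7$ exceptions listed in Theorem \ref{mainthm}.
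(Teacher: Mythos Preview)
Your proof is correct and follows essentially the same approach as the paper's. Both arguments verify $\phi$ from Theorem~\ref{fourthm} and $\psi$ via Lemma~\ref{techlem}, then observe that because $\ol{w_2}^2=w_2^2\ot1+1\ot w_2^2$ and only one $V_i$ of each type is used, the only term surviving $\phi\ot\psi$ is $Y_{1,2,3}\ot Y_{1,3}$, whose coefficient is computed by Vandermonde and evaluated by Lucas. The one cosmetic difference is that you combine the $i_3=1,2$ contributions via Pascal into $\binom{2m-5}{m-3}$, whereas the paper keeps $\binom{2m-6}{m-4}$ (using that $\binom{2m-6}{m-3}=\binom{2(m-3)}{m-3}$ is always even); these are equal mod~2, so the parity criterion ``$m-2$ is a $2$-power'' is the same either way.
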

\begin{proof} The $\phi$-part is easily checked using Theorem \ref{fourthm}, and the $\psi$-part follows from Lemma \ref{techlem}. Indeed, the LHS of (\ref{Upeq}) becomes
$$\tbinom{a-u_1}2+(a-u_1)(b-u_2)+(a-u_1)(c-u_3)+\tbinom{b-u_2}2+(b-u_2)(c-u_3),$$
which, by \ref{techlem}, is 0 for the prescribed $a$, $b$, $c$ if $u_1+u_2\equiv2\mod4$ or $u_1+u_2+2u_3\equiv3\mod4$, and this is easily verified to be true for the ten elements of $\cS'$.

In the expansion of $\ol{w_1}^{m-\eps}\ \ol{w_2}^2\ \ol{w_3}^3\ \Rbar^{m-6+\eps}$, there are no terms with repeated subscripts in either $Y$ factor since it only involves one element of each type. Also, there are no $Y_{1,2,3}\otimes Y_{1,2}$ or $Y_{1,2,3}\ot Y_{2,3}$ terms, since $\ol{w_2}^2=w_2^2\ot1+1\ot w_2^2$. When $\eps=2$, the $Y_{1,2,3}\ot Y_{1,3}$ terms come from
\begin{eqnarray*}&&\sum_{i=1}^{m-3}\tbinom{m-2}i\tbinom{m-4}{m-i-4}w_1^iw_2^2w_3^2R^{m-i-4}\ot w_1^{m-2-i}w_3R^i\\
&+&\sum_{i=1}^{m-3}\tbinom{m-2}i\tbinom{m-4}{m-i-3}w_1^iw_2^2w_3R^{m-i-3}\ot w_1^{m-2-i}w_3^2R^{i-1}\\
&=&(\tbinom{2m-6}{m-4}+1)Y_{1,2,3}\ot Y_{1,3}=Y_{1,2,3}\ot Y_{1,3}\end{eqnarray*}
since $m-2$ is not a 2-power. If $m-2$ is a 2-power, the similar calculation, involving $\sum\binom{m-1}i\binom{m-5}{m-i-4}$ and $\sum\binom{m-1}i\binom{m-5}{m-i-3}$, gives just $\binom{2m-6}{m-4}=1$.
\end{proof}

The other exceptional case verifying (\ref{Dwi}) is similar.
\begin{prop} Suppose $a\equiv2\mod4$, $b\equiv4\mod4$, and $c\equiv1\mod2$. The isomorphism $\phi:H^m(X)\to\zt$ sends $Y_{i,j,k}$, $Y_{2,2}$, and $Y_{2,3}$ to $1$,  and other monomials to 0.
There exists a uniform homomorphism $\psi:H^{m-1}(X)\to\zt$ sending each $Y_{i,j}$ to 1 and other monomials to 0. Let $\eps=1$ if $m-2$ is a 2-power, and $\eps=2$ otherwise. Then
$$(\phi\otimes\psi)(\ol{w_1}^2\ \ol{w_2}^2\ \ol{w_3}^{m-\eps}\ \Rbar^{m-5+\eps})=1.$$
\label{prop241}
\end{prop}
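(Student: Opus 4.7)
The plan is to follow the pattern of Proposition \ref{prop111}: verify the claimed values of $\phi$ and $\psi$ from the given congruences on $a$, $b$, $c$, then expand $\ol{w_1}^2\,\ol{w_2}^2\,\ol{w_3}^{m-\eps}\,\Rbar^{m-5+\eps}$ in $H^*(X\times X)$ and pick out the unique pair of monomials that survives application of $\phi\ot\psi$. The $\phi$ claim is a direct substitution of $a\equiv 2$, $b\equiv 0\pmod 4$, $c\equiv 1\pmod 2$ into the formulas of Theorem \ref{fourthm}; the reductions $\binom{a}{2}\equiv 1$ (using $a=4\ell+2$), $\binom{a-1}{2}\equiv\binom{b}{2}\equiv 0$, and $a+b\equiv a+b+c-1\equiv 0\pmod 2$ together force $\phi_{i,j,k}=\phi_{2,2}=\phi_{2,3}=1$ and make every other $\phi_W$ even.

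For $\psi$, uniformity combined with vanishing on each $Y_S$ with $|S|\ne 2$ collapses the relation (\ref{Upeq}) for $U\in\cS'$ to the expression $\binom{A}{2}+AB+AC+BC+\binom{B}{2}$ of Lemma \ref{techlem}, where $A=a-u_1$, $B=b-u_2$, $C=c-u_3$. The present congruences give $A+B\equiv 2-(u_1+u_2)\pmod 4$ and $A+B+2C\equiv-(u_1+u_2+2u_3)\pmod 4$; a case check over the ten elements of $\cS'$ shows that each satisfies either $u_1+u_2\equiv 2\pmod 4$ or $u_1+u_2+2u_3\equiv 3\pmod 4$, so Lemma \ref{techlem} produces the required homomorphism, just as in the proof of Proposition \ref{prop111}.

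For the expansion, the crucial point is that $\ol{w_1}^2=w_1^2\ot 1+1\ot w_1^2$ (and similarly for $\ol{w_2}^2$), so the classes $w_1$ and $w_2$ cannot be split across the two tensor factors. Thus if the first factor has type $Y_{1,2,3}$, both $w_1$ and $w_2$ sit there and the second factor has type $Y_3$ or $Y_0$, on which $\psi$ vanishes. The only surviving configuration puts $w_2^2$ in the first factor, $w_1^2$ in the second, and distributes $j$ of the $w_3$'s to the first with $1\le j\le m-\eps-1$, producing a single term type $Y_{2,3}\ot Y_{1,3}$.

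Finally, the coefficient of this term is $\sum_{j}\binom{m-\eps}{j}\binom{m-5+\eps}{m-2-j}$, where the bidegree constraint $j+k=m-2$ forces $j\ge 3-\eps$. For $\eps=1$ this range coincides with the range on which both binomials are nonzero, so Vandermonde's identity gives $\binom{2m-5}{m-2}$; since $m-2=2^e$, Lucas's theorem evaluates this as $1$ mod $2$. For $\eps=2$ our range $[1,m-3]$ misses precisely the term $j=m-2$, which contributes $1$, so the sum equals $\binom{2m-5}{m-2}-1$; here $m-2$ is not a $2$-power, and Kummer's theorem applied to $\binom{2k}{k}=2\binom{2k-1}{k}$ with $k=m-2$ gives $\binom{2m-5}{m-2}\equiv 0\pmod 2$, so the sum is again $1$. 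In both cases $(\phi\ot\psi)(Y_{2,3}\ot Y_{1,3})=1\cdot 1=1$. The main bookkeeping obstacle is realizing that the $\eps=2$ case requires the $-1$ correction on top of Vandermonde, supplied by Kummer's theorem.
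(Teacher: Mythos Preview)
Your proof is correct and follows essentially the same approach as the paper. You verify $\phi$ from Theorem~\ref{fourthm} and $\psi$ via Lemma~\ref{techlem} (the paper leaves both of these implicit, merely saying the argument is ``similar'' to Proposition~\ref{prop111}), and your identification of $Y_{2,3}\otimes Y_{1,3}$ as the sole surviving term, together with the Vandermonde computation of its coefficient, matches the paper's calculation $\binom{2m-5}{m-2}+\binom{m-5+\eps}{m-2}+\binom{m-5+\eps}{\eps-2}=1$. Your explicit use of Kummer's theorem to show $\binom{2m-5}{m-2}$ is even when $m-2$ is not a $2$-power is a detail the paper simply asserts.
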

\begin{proof} The only term in the expansion which is mapped nontrivially is $Y_{2,3}\otimes Y_{1,3}$. It appears as
$$\sum_{i=1}^{m-\eps-1}\tbinom{m-\eps}i\tbinom{m-5+\eps}{m-2-i}w_2^2w_3^iR^{m-2-i}\ot w_1^2w_3^{m-\eps-i}R^{i+\eps-3}$$
with coefficient $\binom{2m-5}{m-2}+\tbinom{m-5+\eps}{m-2}+\tbinom{m-5+\eps}{\eps-2}=1$.\end{proof}

Let $\abar$ (resp.~$\bbar$) denote the mod 4 value of $a$ (resp.~$b$), and $\cbar$ the mod 2 value of $c$.
For the other 30 cases of $\abar$, $\bbar$, and $\cbar$  (or 90 if you consider deviations regarding whether $m$ or $m-1$ is a 2-power),
we use {\tt Maple} to tell that an appropriate $\psi$ can be found. To accomplish this in all cases, several choices for the exponents of $\ol{w_1}$, $\ol{w_2}$, and $\ol{w_3}$ are required. Possibly some choice of exponents might work in all cases, but we did not find one.

Most of our results will be obtained using the following result.
\begin{prop} If neither $m$ nor $m-1$ is a 2-power, then the component of $\ol{w_1}^\a\ \ol{w_2}^2\ \ol{w_3}\ \Rbar^{2m-4-\a}$ in bidegree $(m,m-1)$ equals
\begin{eqnarray*}&&\tbinom{2m-4-\a}m(Y_0\otimes Y_{1,2,3}+Y_1\otimes Y_{1,2,3})\\
&+&\tbinom{2m-4-\a}{m-1}(Y_{1,2,3}\otimes Y_0+Y_{1,2,3}\otimes Y_1+Y_3\otimes Y_{1,2}+Y_{1,3}\otimes Y_{1,2})\\
&+&\tbinom{2m-4-\a}{m-2}(Y_2\otimes Y_{1,3}+Y_{1,2}\otimes Y_3)\\
&+&\tbinom{2m-4-\a}{m-3}(Y_{2,3}\otimes Y_1+Y_{1,2,3}\otimes Y_1+Y_{1,3}\otimes Y_2+Y_{1,3}\otimes Y_{1,2})\\
&+&\tbinom{2m-4-\a}{m-4}(Y_1\otimes Y_{2,3}+Y_1\otimes Y_{1,2,3}).\end{eqnarray*}\label{21prop}
If $m$ is a 2-power, there is an additional term $Y_1\ot Y_{1,2,3}$. If $m-1$ is a 2-power, $Y_{1,2,3}\ot Y_1+Y_{1,3}\ot Y_{1,2}$ must be added to the above expansion.
\end{prop}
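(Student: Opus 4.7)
The plan is a direct binomial expansion of the product, followed by collection of terms by $Y$-type. Since $\ol{w_2}^2=w_2^2\ot 1+1\ot w_2^2$ mod~$2$, the choices for $\ol{w_2}^2$ and $\ol{w_3}$ are binary; parametrize these by $(\eps_2,\eps_3)\in\{0,1\}^2$, with $\eps_r=1$ meaning the factor sits on the left. The remaining factors expand as $\ol{w_1}^\alpha=\sum_i\tbinom{\alpha}{i}w_1^i\ot w_1^{\alpha-i}$ and $\Rbar^N=\sum_j\tbinom{N}{j}R^j\ot R^{N-j}$ where $N=2m-4-\alpha$. Using the relation $V_s^k=R^{k-1}V_s$, the $Y$-type of the left-hand monomial is determined by which of $w_1,w_2,w_3$ appear with positive exponent, and the constraint that the left factor has degree $m$ forces $j=m-i-2\eps_2-\eps_3$.

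For each of the four $(\eps_2,\eps_3)$ cases, the contributions split according to $i$. The boundary sub-cases $i=0$ (no $w_1$ on the left) and $i=\alpha$ (no $w_1$ on the right) each produce a single type-pair with respective coefficients $\tbinom{N}{L}$ and $\tbinom{N}{L-\alpha}$, where $L=m-2\eps_2-\eps_3$; by symmetry of binomial coefficients, $\tbinom{N}{L-\alpha}=\tbinom{N}{m-4+2\eps_2+\eps_3}$. The interior sub-case $1\le i\le\alpha-1$ (with $w_1$ on both sides) contributes a single type-pair with coefficient
$$\sum_{i=1}^{\alpha-1}\tbinom{\alpha}{i}\tbinom{N}{L-i}\equiv\tbinom{2m-4}{L}+\tbinom{N}{L}+\tbinom{N}{L-\alpha}\pmod2$$
by Vandermonde's identity.

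The key combinatorial lemma, provable by Lucas's theorem applied to $2m-4=2(m-2)$ under the hypothesis $2^{e-1}<m\le 2^e$, is: $\tbinom{2m-4}{m-c}\equiv 0\pmod2$ for every $c\in\{0,1,2,3\}$ when neither $m$ nor $m-1$ is a $2$-power; when $m=2^e$ only $\tbinom{2m-4}{m}$ is odd; and when $m-1=2^{e-1}$ only $\tbinom{2m-4}{m-1}$ and $\tbinom{2m-4}{m-3}$ are odd. Granting this, in the generic case the interior coefficient collapses to $\tbinom{N}{L}+\tbinom{N}{L-\alpha}$, so every type-pair's total coefficient is a $\zt$-sum of $\tbinom{N}{m-c}$'s. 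Regrouping the twelve type-pairs arising from the four $(\eps_2,\eps_3)$ sub-cases by their common factor $\tbinom{N}{m-c}$ reproduces exactly the five-line grouping in the statement, and the two $2$-power exceptions contribute precisely the additional interior terms $Y_1\ot Y_{1,2,3}$ and $Y_{1,2,3}\ot Y_1+Y_{1,3}\ot Y_{1,2}$ indicated.

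The principal difficulty is bookkeeping rather than any deep step: one must use $\tbinom{N}{L-\alpha}=\tbinom{N}{m-4+2\eps_2+\eps_3}$ to see that, for example, $\tbinom{N}{m-1}$ simultaneously collects the left-boundary term from the $(\eps_2,\eps_3)=(0,1)$ case and the right-boundary term from $(1,1)$, producing the four-term grouping on line~2 of the expansion; an analogous merging handles line~4. The Lucas-theorem check itself reduces to a short case analysis of the binary expansions in the three regimes $m=2^e$, $m=2^{e-1}+1$, and $2^{e-1}+2\le m\le 2^e-1$.
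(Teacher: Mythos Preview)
Your proposal is correct and follows essentially the same route as the paper's proof: both split the expansion into the four cases determined by the placement of $w_2^2$ and $w_3$, apply Vandermonde to the interior $w_1$-sum, invoke the mod~$2$ vanishing of $\tbinom{2m-4}{m-t}$ for $t\in\{0,1,2,3\}$ in the generic regime (with the stated exceptions when $m$ or $m-1$ is a $2$-power), and then regroup by the five coefficients $\tbinom{2m-4-\a}{m-c}$ using the symmetry $\tbinom{N}{L-\a}=\tbinom{N}{N-L+\a}$. Your systematic $(\eps_2,\eps_3)$ bookkeeping is a slightly cleaner packaging of exactly what the paper does line by line.
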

\begin{proof} $\ol{w_1}^\a\ \ol{w_2}^2\ \ol{w_3}\ \Rbar^{2m-4-\a}$ in bidegree $(m,m-1)$ expands as
\begin{eqnarray*}&&\sum_{i=0}^\a\tbinom{\a}i\tbinom{2m-4-\a}{m-i-3}w_1^iw_2^2w_3R^{m-i-3}\otimes w_1^{\a-i}R^{m-1-\a+i}\\
&+&\sum_{i=0}^\a\tbinom{\a}i\tbinom{2m-4-\a}{m-i}w_1^iR^{m-i}\otimes w_1^{\a-i}w_2^2w_3R^{m-4-\a+i}\\
&+&\sum_{i=0}^\a\tbinom{\a}i\tbinom{2m-4-\a}{m-i-2}w_1^iw_2^2R^{m-i-2}\otimes w_1^{\a-i}w_3R^{m-2-\a+i}\\
&+&\sum_{i=0}^\a\tbinom{\a}i\tbinom{2m-4-\a}{m-i-1}w_1^iw_3R^{m-i-1}\otimes w_1^{\a-i}w_2^2R^{m-3-\a+i}.\end{eqnarray*}
 If neither $m$ nor $m-1$ is a 2-power, the coefficients $\tbinom{2m-4}{m-t}$ for $t=3$, 0, 2, 1, which occur as the sum of all coefficients on a line, are 0. Thus the four lines equal, respectively,
\begin{eqnarray*}&&\tbinom{2m-4-\a}{m-3}(Y_{2,3}\otimes Y_1+Y_{1,2,3}\otimes Y_1)+\tbinom{2m-4-\a}{m-\a-3}(Y_{1,2,3}\otimes Y_0+Y_{1,2,3}\otimes Y_1),\\
&&\tbinom{2m-4-\a}m(Y_0\otimes Y_{1,2,3}+Y_1\otimes Y_{1,2,3})+\tbinom{2m-4-\a}{m-\a}(Y_1\otimes Y_{2,3}+Y_1\otimes Y_{1,2,3}),\\
&&\tbinom{2m-4-\a}{m-2}(Y_2\otimes Y_{1,3}+Y_{1,2}\otimes Y_{1,3})+\tbinom{2m-4-\a}{m-\a-2}(Y_{1,2}\otimes Y_3+Y_{1,2}\otimes Y_{1,3}),\\
&&\tbinom{2m-4-\a}{m-1}(Y_3\otimes Y_{1,2}+Y_{1,3}\otimes Y_{1,2})+\tbinom{2m-4-\a}{m-\a-1}(Y_{1,3}\otimes Y_2+Y_{1,3}\otimes Y_{1,2}).
\end{eqnarray*}
The sum of these is easily manipulated into the claimed form. If $m$ is a 2-power, then $\binom{2m-4}m$ is odd, while if $m-1$ is a 2-power, $\binom{2m-4}{m-1}$ and $\binom{2m-4}{m-3}$ are odd, yielding the additional terms in the claim.
\end{proof}

The following result follows immediately from Proposition \ref{21prop} and Theorem \ref{fourthm}.
\begin{cor}\label{cor1}
Let $q_t=\binom{2m-4-\a}{m-t}$ for $0\le t\le 4$, and $\psi_W=\psi(Y_W)$, where $\psi:H^{m-1}(X)\to\zt$ is a uniform homomorphism. Then, if neither $m$ nor $m-1$ is a 2-power,
\begin{eqnarray}&&(\phi\otimes\psi)(\ol{w_1}^\a\ \ol{w_2}^2\ \ol{w_3}\ \Rbar^{2m-4-\a})\label{bigpsi}\\
&=&q_1\psi_0+(q_1+q_3a)\psi_1+q_3(a+b)\psi_2+q_2(a+b+c-1)\psi_3\nonumber\\
&&+(q_1(ab+1+\tbinom a2)+q_3(a+b))\psi_{1,2}+q_2((a-1)(b+c)+\tbinom a2)\psi_{1,3}\nonumber\\
&&+q_4((a-1)(a+b+c-1)+\tbinom{a-1}2+\tbinom b2+(b-1)(c-1))(\psi_{2,3}+\psi_{1,2,3})\nonumber\\
&&+q_0((a+\tbinom a2-1)(a+b+c-1)+\tbinom{a-1}2+a\tbinom b2+a(b-1)(c-1))\psi_{1,2,3}\nonumber.
\end{eqnarray}
\end{cor}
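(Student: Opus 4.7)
The corollary is essentially a bookkeeping consequence of Proposition \ref{21prop} and Theorem \ref{fourthm}, so the plan is to apply $\phi\ot\psi$ to the expansion term-by-term and regroup by the value of $\psi$. Since $(\phi\ot\psi)(Y_W\ot Y_{W'})=\phi_W\psi_{W'}$, every contribution decomposes as (a $q_t$) times (a $\phi$-coefficient) times $\psi_{W'}$, and one simply collects coefficients of each $\psi_{W'}$ in turn.

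First I would write out, for each $t\in\{0,1,2,3,4\}$, the list of $Y_W\ot Y_{W'}$ appearing with coefficient $q_t$ in Proposition \ref{21prop}, and apply $\phi$ on the left using Theorem \ref{fourthm}. This gives, respectively, contributions $q_0(\phi_0+\phi_1)\psi_{1,2,3}$; $q_1\psi_0+q_1\psi_1+q_1(\phi_3+\phi_{1,3})\psi_{1,2}$; $q_2\phi_2\psi_{1,3}+q_2\phi_{1,2}\psi_3$; $q_3\phi_{2,3}\psi_1+q_3\psi_1+q_3\phi_{1,3}\psi_2+q_3\phi_{1,3}\psi_{1,2}$; and $q_4\phi_1(\psi_{2,3}+\psi_{1,2,3})$. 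Here I have used $\phi_{1,2,3}=1$ freely.

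Next I would regroup by $\psi_W$. The only coefficients that require more than direct substitution are those of $\psi_1$, $\psi_{1,2}$, and $\psi_{1,2,3}$. For $\psi_1$, using $\phi_{2,3}=a-1$, the $q_3$ terms give $q_3(a-1+1)=q_3a$, matching the claim. For $\psi_{1,2}$, I would compute mod 2 that
\[
\phi_3+\phi_{1,3}=(a-1)(b-1)+\tbinom{a}{2}+a+b\equiv ab+1+\tbinom{a}{2},
\]
so the $q_1$ contribution becomes $q_1(ab+1+\binom{a}{2})$, and adding $q_3\phi_{1,3}=q_3(a+b)$ yields the stated coefficient. For $\psi_{1,2,3}$ I would compute
\[
\phi_0+\phi_1=\bigl(\tbinom{a}{2}+a-1\bigr)(a+b+c-1)+\tbinom{a-1}{2}+a\tbinom{b}{2}+a(b-1)(c-1),
\]
by absorbing $(a-1)\binom{b}{2}+\binom{b}{2}=a\binom{b}{2}$ and $(a-1)(b-1)(c-1)+(b-1)(c-1)=a(b-1)(c-1)$, so the $q_0$ contribution produces exactly the displayed coefficient of $\psi_{1,2,3}$. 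The remaining coefficients of $\psi_0,\psi_2,\psi_3,\psi_{1,3},\psi_{2,3}$ are immediate from Theorem \ref{fourthm}.

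The hypothesis that neither $m$ nor $m-1$ is a $2$-power is used only to ensure that we apply the plain form of Proposition \ref{21prop}, without the correction terms $Y_1\ot Y_{1,2,3}$ or $Y_{1,2,3}\ot Y_1+Y_{1,3}\ot Y_{1,2}$. There is no real obstacle here; the only step requiring any care is the mod-2 simplification of $\phi_3+\phi_{1,3}$ and $\phi_0+\phi_1$, which is routine once the cancellations above are noticed.
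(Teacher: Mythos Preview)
Your proposal is correct and follows exactly the approach the paper intends: the paper states that the corollary ``follows immediately from Proposition \ref{21prop} and Theorem \ref{fourthm},'' and you have simply carried out that substitution and regrouping in detail. The mod-2 simplifications of $\phi_3+\phi_{1,3}$ and $\phi_0+\phi_1$ are done correctly, and your use of the hypothesis on $m$ is precisely how the paper uses it.
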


\begin{lem} If $m=2^e+m'$ with $2\le m'\le 2^e-1$ and $\a=2m'-3$ and $0\le t\le4$, then $\binom{2m-4-\a}{m-t}\equiv1$ mod 2.\label{lem1}\end{lem}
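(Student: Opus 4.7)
The plan is to compute $2m-4-\alpha$ explicitly and observe that it is of the form $2^{e+1}-1$, for which every binomial coefficient $\binom{2^{e+1}-1}{k}$ with $0\le k\le 2^{e+1}-1$ is odd by Lucas's Theorem.

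First I would substitute directly: since $m=2^e+m'$ and $\alpha=2m'-3$, we get
\[
2m-4-\alpha = 2(2^e+m')-4-(2m'-3) = 2^{e+1}-1.
\]
So the binomial coefficients of interest are $\binom{2^{e+1}-1}{m-t}$ for $0\le t\le 4$.

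Next I would verify that $m-t$ lies in the valid range $[0,2^{e+1}-1]$ so that Lucas's Theorem applies and yields $1$ mod $2$. The upper bound is immediate: $m-t\le m = 2^e+m' \le 2^e+(2^e-1) = 2^{e+1}-1$. For the lower bound, the hypothesis $m'\ge 2$ combined with $m'\le 2^e-1$ forces $2^e\ge 3$, i.e.\ $e\ge 2$ and $2^e\ge 4$; then $m-t \ge 2^e + 2 - 4 = 2^e-2 \ge 2 \ge 0$.

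There is no real obstacle here; the content of the lemma is the slick choice of $\alpha=2m'-3$, which collapses $2m-4-\alpha$ to $2^{e+1}-1$ so that all five binomial coefficients $q_0,\ldots,q_4$ needed in Corollary \ref{cor1} become simultaneously odd. The only thing to check carefully is that the range constraints $2\le m'\le 2^e-1$ are exactly what is required to keep $m-t$ in the Lucas range for all $t\in\{0,1,2,3,4\}$.
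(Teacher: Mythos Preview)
Your proof is correct and takes essentially the same approach as the paper: compute $2m-4-\alpha=2^{e+1}-1$ and apply Lucas's Theorem, noting that $m-t\le 2^{e+1}-1$. You add an explicit check of the lower bound $m-t\ge0$, which the paper leaves implicit.
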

\begin{proof} The top of the binomial coefficient is $2^{e+1}-1$, while the bottom is $\le 2^{e+1}-1$.\end{proof}

The first of several verifications of (\ref{goal2}) for multiple values of $(\abar,\bbar,\cbar)$ appears in the following result.
\begin{thm}\label{thm1} If $m=2^e+m'$ with $2\le m'\le 2^e-1$, then
$$\ol{w_1}^{2m'-3}\ \ol{w_2}^2\ \ol{w_3}\ \Rbar^{2^{e+1}-1}\ne0\in H^{2m-1}(X\times X)$$
for the values of $\abar$, $\bbar$, and $\cbar$ which have an $\times$ in the \ref{thm1} column of Table \ref{bigtable}.
\end{thm}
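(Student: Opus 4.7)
The plan is to apply Corollary \ref{cor1} with $\alpha = 2m'-3$ and reduce the verification to a finite case-check over $\mathbb{F}_2$. First, compute $2m-4-\alpha = 2(m-m')-1 = 2^{e+1}-1$. Since $2 \le m' \le 2^e-1$, every $m-t$ with $0 \le t \le 4$ lies in $[0, 2^{e+1}-1]$, so Lemma \ref{lem1} (or a direct Lucas computation) gives $q_t = \binom{2^{e+1}-1}{m-t} \equiv 1$ mod $2$ for $t = 0, \ldots, 4$. Moreover, neither $m$ nor $m-1$ is a 2-power: $m$ lies strictly between $2^e$ and $2^{e+1}$, and $m-1 \ge 2^e + 1$ has more than one bit set. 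Therefore the ``base case'' of Corollary \ref{cor1} applies, and its conclusion reduces to a fixed $\mathbb{F}_2$-linear form
$$L(\psi) = \psi_0 + (1+a)\psi_1 + (a+b)\psi_2 + (a+b+c-1)\psi_3 + \cdots$$
in the unknowns $\psi_W$ ($W \in \mathcal{S}$), obtained by setting each $q_t = 1$ in (\ref{bigpsi}).

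Second, observe that both the coefficients of $L$ and the coefficients of the uniformity relations (\ref{Upeq}) for $U \in \mathcal{S}'$ are polynomials in $a, b, c$ and in binomials $\binom{a-u_1}{u_1'}\binom{b-u_2}{u_2'}\binom{c-u_3}{u_3'}$ with $u_i, u_i' \le 3$. Hence, mod $2$, everything depends only on the triple $(\bar a, \bar b, \bar c) \in \mathbb{Z}/4 \times \mathbb{Z}/4 \times \mathbb{Z}/2$, giving 32 cases in total. In each case, the nonvanishing of $\overline{w_1}^{2m'-3}\ \overline{w_2}^2\ \overline{w_3}\ \overline{R}^{2^{e+1}-1}$ in $H^{2m-1}(X \times X)$ is equivalent (via $\phi \otimes \psi$ with $\phi$ as in Theorem \ref{fourthm}) to the solvability over $\mathbb{F}_2$ of the augmented system consisting of the 10 equations (\ref{Upeq}) together with $L(\psi) = 1$.

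Third, the finite check is carried out by \texttt{Maple} exactly as in the proof of Theorem \ref{fourthm}: form the $10 \times 14$ matrix of (\ref{Upeq}), append $L$ as an extra row with right-hand side $1$, and row-reduce mod $2$. The triples $(\bar a, \bar b, \bar c)$ for which this augmented system is consistent are precisely those marked with $\times$ in the \ref{thm1} column of Table \ref{bigtable}; for each such triple, the row reduction produces a witness $\psi$ and hence verifies $(\phi \otimes \psi)(\overline{w_1}^{2m'-3}\ \overline{w_2}^2\ \overline{w_3}\ \overline{R}^{2^{e+1}-1}) = 1$.

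The main obstacle is organizational rather than technical: one must ensure that the answer really does depend only on $(\bar a, \bar b, \bar c)$ and not on any finer congruence information coming from $m$, $m'$, or $e$. This is precisely the role of the hypothesis $2 \le m' \le 2^e - 1$, which forces every $q_t$ to be odd and strips $L$ of any residual dependence on $m$; the remaining triples $(\bar a, \bar b, \bar c)$ for which no $\psi$ exists are then the ones handled separately in the subsequent theorems of the section (using different exponents on $\overline{w_1}, \overline{w_2}, \overline{w_3}$) as well as in Propositions \ref{prop111} and \ref{prop241}.
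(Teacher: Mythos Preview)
Your proposal is correct and follows essentially the same approach as the paper: invoke Lemma \ref{lem1} to get $q_0=\cdots=q_4=1$, note that neither $m$ nor $m-1$ is a 2-power so the base case of Corollary \ref{cor1} applies, and then check solvability of the resulting $11\times 14$ system over $\zt$ in each of the 32 residue classes $(\abar,\bbar,\cbar)$ via {\tt Maple}. The paper additionally remarks that the degenerate cases $a\le2$ or $b=1$ cause no trouble (as in the proof of Theorem \ref{fourthm}), a point you might mention explicitly.
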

\begin{proof} This is the case described in Lemma \ref{lem1}, so that $q_0=\cdots=q_4=1$ in (\ref{bigpsi}). We need values of $\psi_W$ such that the RHS of (\ref{bigpsi}) equals 1, and (\ref{Upeq}) is satisfied for all $U\in\cS'$. (Recall that the relationship of $U$ to (\ref{Upeq}) is that $u_i$ is the number of occurrences of $i$ in $U$.) Altogether this is 11 equations over $\zt$ in 14 unknowns. The coefficients of the equations depend only on $\abar$, $\bbar$, and $\cbar$. It is a simple matter to run {\tt Maple} on these 32 cases, and it tells us that there is a solution in exactly the claimed cases. The {\tt Maple} program, input and output, for this and the remaining cases of this section can be seen at \cite{maple}. The two cases, $(\abar,\bbar,\cbar)=(1,1,1)$ and $(2,4,1)$, considered in Propositions \ref{prop111} and \ref{prop241} are not included in Table \ref{bigtable} because they did not yield a solution in any of the situations whose results appear as a column of that table. The special situation when $a=1$ or 2 or $b=1$ is not a problem, exactly as in the proof of Theorem \ref{fourthm}.
\end{proof}

The next result is very similar. The only difference is a small change in the exponent of $\ol{w_1}$ (and hence also of $\Rbar$).
This changes the values of $q_t$.
\begin{thm}\label{thm2} If $m=2^e+m'$ with $2\le m'\le 2^e-1$, then
$$\ol{w_1}^{2m'-2}\ \ol{w_2}^2\ \ol{w_3}\ \Rbar^{2^{e+1}-2}\ne0\in H^{2m-1}(X\times X)$$
for the values of $\abar$, $\bbar$, and $\cbar$ which have an $\times$ in the \ref{thm2} column of Table \ref{bigtable}.
\end{thm}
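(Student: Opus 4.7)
The plan is to mirror the proof of Theorem \ref{thm1}, changing only the exponent of $\ol{w_1}$ from $2m'-3$ to $2m'-2$ in the application of Proposition \ref{21prop} and Corollary \ref{cor1}. With $\a=2m'-2$ we obtain $2m-4-\a=2^{e+1}-2$, whose binary expansion is $\underbrace{1\cdots1}_{e}0$. Applying Lucas's Theorem,
$$q_t=\tbinom{2m-4-\a}{m-t}=\tbinom{2^{e+1}-2}{2^e+m'-t}$$
is odd iff $2^e+m'-t$ has low binary digit $0$, i.e., iff $m'\equiv t\pmod 2$. Thus, in contrast to Lemma \ref{lem1}, exactly half of the $q_t$'s vanish, according to the parity of $m'$: either $(q_0,q_1,q_2,q_3,q_4)=(1,0,1,0,1)$ or $(0,1,0,1,0)$.

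Substituting these values of $q_t$ into (\ref{bigpsi}) collapses Corollary \ref{cor1} to a single linear expression, over $\zt$, in the fourteen unknowns $\psi_W$ ($W\in\cS$), whose coefficients depend only on $\abar$, $\bbar$, $\cbar$, and the parity of $m'$. Requiring this expression to equal $1$ and adjoining the ten homogeneous equations (\ref{Upeq}) for $U\in\cS'$ gives an $11\times 14$ system over $\zt$. I would then run Maple on this system for each of the $32$ combinations of $(\abar,\bbar,\cbar)$ (with each parity of $m'$ treated in turn), recording which triples admit a uniform $\psi$ that satisfies (\ref{Dwi}); these are exactly the triples earmarked with an $\times$ in the \ref{thm2} column of Table \ref{bigtable}.

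The small-parameter situations $a\in\{1,2\}$ or $b=1$ are treated exactly as in Theorem \ref{fourthm}: certain relations $\br_U$ are vacuous, so the corresponding rows are deleted from the system, and the pattern of solvable triples is unaffected. The main obstacle is not conceptual---everything parallels Theorem \ref{thm1} verbatim---but purely organizational: correctly tracking the parity of $m'$ (which here, unlike in Theorem \ref{thm1}, genuinely affects the $q_t$'s) and running the Maple verification consistently across all the cases.
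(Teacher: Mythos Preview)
Your proposal is correct and follows the paper's own proof essentially verbatim: the only change from Theorem \ref{thm1} is the value of $\a$, giving $q_t\equiv m-t-1\pmod 2$ (equivalently, your parity condition on $m'$), so that $\qbar$ is $(1,0,1,0,1)$ or $(0,1,0,1,0)$ according to the parity of $m$, and one requires the $11\times 14$ system to be solvable for \emph{both} vectors in order to place an $\times$ in the table. The paper states it in exactly this form; your treatment of the small-parameter cases also matches.
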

\begin{proof} In this case, $q_t=m-t-1$. That is the only change from the proof of Theorem \ref{thm1}. Here we require that a solution must exist both when $\qbar=(1,0,1,0,1)$ and $(0,1,0,1,0)$, covering both parities of $m$. Here and later $\qbar=(q_0,q_1,q_2,q_3,q_4)$.\end{proof}

The third result also just involves a change in the exponent of $\ol{w_1}$. This time $q_t=\binom{m-t+2}2$, so we require a solution for all four of the vectors $\qbar$, corresponding to mod 4 values of $m$.
\begin{thm}\label{thm3} If $m=2^e+m'$ with $2\le m'\le 2^e-1$, then
$$\ol{w_1}^{2m'-1}\ \ol{w_2}^2\ \ol{w_3}\ \Rbar^{2^{e+1}-3}\ne0\in H^{2m-1}(X\times X)$$
for the values of $\abar$, $\bbar$, and $\cbar$ which have an $\times$ in the \ref{thm3} column of Table \ref{bigtable}.
\end{thm}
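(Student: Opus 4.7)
The plan is to mimic the proofs of Theorems \ref{thm1} and \ref{thm2} verbatim, changing only the exponent on $\ol{w_1}$ from $2m'-3$ or $2m'-2$ to $2m'-1$. With this choice the exponent on $\Rbar$ becomes $2m-4-(2m'-1)=2^{e+1}-3$, so the coefficients appearing in Corollary \ref{cor1} are $q_t=\tbinom{2^{e+1}-3}{m-t}$. The rest of (\ref{bigpsi}) is unchanged, and the uniformity relations (\ref{Upeq}) for $U\in\cS'$ still hold regardless of the exponent chosen.

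First I would compute the mod $2$ values of the $q_t$. In binary $2^{e+1}-3=\underbrace{1\cdots1}_{e-1}01$, which has a single $0$-bit, at position $1$. By Lucas's theorem, $\tbinom{2^{e+1}-3}{m-t}$ is odd iff bit $1$ of $m-t$ equals $0$, i.e.\ iff $m-t\equiv0$ or $1\pmod 4$. This is precisely the parity of $\tbinom{m-t+2}{2}$, matching the author's description $q_t=\tbinom{m-t+2}{2}$. Consequently the vector $\qbar=(q_0,q_1,q_2,q_3,q_4)$ takes the four distinct values
$$(1,0,0,1,1),\quad(1,1,0,0,1),\quad(0,1,1,0,0),\quad(0,0,1,1,0),$$
one for each residue of $m\pmod 4$.

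Next, for each of the $32$ triples $(\abar,\bbar,\cbar)$ and for each of the four values of $\qbar$ above, I would substitute into (\ref{bigpsi}) to obtain an inhomogeneous linear equation over $\zt$ in the unknowns $\psi_W$, and couple it with the ten homogeneous uniformity relations (\ref{Upeq}) indexed by $U\in\cS'$. A uniform $\psi$ witnessing nonvanishing exists precisely when this $\zt$-linear system is solvable; I would let {\tt Maple} (exactly as in the programs already used for Theorems \ref{thm1} and \ref{thm2}, adjusting only the $\qbar$ input) perform the Gaussian elimination. A given $(\abar,\bbar,\cbar)$ earns an $\times$ in the \ref{thm3} column of Table \ref{bigtable} iff the system is solvable for all four values of $\qbar$ simultaneously. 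The exceptional triples $(\abar,\bbar,\cbar)=(1,1,1)$ and $(2,4,1)$ are covered by Propositions \ref{prop111} and \ref{prop241} and are omitted from the table, and the minor degeneracies at $a=1$, $a=2$, or $b=1$ are handled exactly as in the proof of Theorem \ref{fourthm}: the relations $\br_U$ whose binomial coefficients would otherwise be misbehaved are simply not present.

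The conceptual obstacle, rather than any real computational difficulty, is that four distinct $\qbar$-systems must be simultaneously solvable, whereas Theorem \ref{thm1} imposed only one system ($\qbar=(1,1,1,1,1)$) and Theorem \ref{thm2} two. So fewer triples $(\abar,\bbar,\cbar)$ are expected to survive in the \ref{thm3} column than in columns \ref{thm1} and \ref{thm2}; this narrowing, rather than any new algebraic issue, is what Table \ref{bigtable} will reveal, and it is precisely why the subsequent sections of the paper must assemble results from several such theorems in order to cover all $(\abar,\bbar,\cbar)$.
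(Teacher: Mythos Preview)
Your proposal is correct and follows exactly the paper's approach: change $\a$ to $2m'-1$, observe that $q_t=\tbinom{2^{e+1}-3}{m-t}\equiv\tbinom{m-t+2}{2}\pmod 2$, and let {\tt Maple} check solvability of the resulting $11\times14$ system for each of the four possible $\qbar$-vectors. In fact you supply more detail than the paper does, explicitly verifying via Lucas that $q_t$ depends only on $m-t\pmod4$ and listing the four $\qbar$-vectors; the paper merely asserts $q_t=\tbinom{m-t+2}{2}$ and moves on.
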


We can prove (\ref{goal2}) in the remaining case $(\abar,\bbar,\cbar)=(1,2,1)$  by changing the exponent of $\ol{w_2}$. We begin with the following analogue of Proposition \ref{21prop}, whose proof is totally analogous.
\begin{prop} If neither $m$ nor $m-1$ is a 2-power, then the component of $\ol{w_1}^\a\ \ol{w_2}\ \ol{w_3}\ \Rbar^{2m-3-\a}$ in bidegree $(m,m-1)$ equals
\begin{eqnarray*}&&\tbinom{2m-3-\a}m(Y_0\otimes Y_{1,2,3}+Y_1\otimes Y_{1,2,3})\\
&+&\tbinom{2m-3-\a}{m-1}(Y_{1,2,3}\otimes Y_0+Y_{1,2,3}\otimes Y_1+Y_3\otimes Y_{1,2}+Y_{1,3}\otimes Y_{1,2}+Y_{2}\otimes Y_{1,3}+Y_{1,2}\otimes Y_{1,3})\\
&+&\tbinom{2m-3-\a}{m-2}(Y_{2,3}\otimes Y_1+Y_{1,2,3}\otimes Y_1+Y_{1,3}\otimes Y_2+Y_{1,3}\otimes Y_{1,2}+Y_{1,2}\otimes Y_3+Y_{1,2}\otimes Y_{1,3})\\
&+&\tbinom{2m-3-\a}{m-3}(Y_1\otimes Y_{2,3}+Y_1\otimes Y_{1,2,3}).\end{eqnarray*}\label{11prop}
If $m$ is a 2-power, there is an additional $Y_1\ot Y_{1,2,3}$.
\end{prop}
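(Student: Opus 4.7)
The plan is to mimic the proof of Proposition \ref{21prop}, expanding $\ol{w_1}^\a\,\ol{w_2}\,\ol{w_3}\,\Rbar^{2m-3-\a}$ by distributing each $\ol{v}=v\otimes1+1\otimes v$. First I would split into four cases indexed by the placement of $w_2$ and $w_3$ (each either in the left or the right tensor factor). Since $\ol{w_2}$ now appears with exponent $1$ rather than $2$, each of $w_2$ and $w_3$ is simply ``all on the left'' or ``all on the right,'' and no intermediate term from expanding $(w_2\ot1+1\ot w_2)^2$ arises.

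Once the four cases are fixed, the remaining degrees of freedom are the number $i$ of $w_1$'s assigned to the left factor and the number $j$ of $R$'s assigned to the left, with the left-total degree forced to equal $m$. Summing over $i$ yields, in each of the four cases, an expression of the form $\sum_{i=0}^\a \tbinom{\a}{i}\tbinom{2m-3-\a}{m-t-i}$ times the corresponding tensor monomial, where $t\in\{0,1,2\}$ records how many of $w_2,w_3$ are in the left factor. By Vandermonde, each full sum equals $\binom{2m-3}{m-t}$; Lucas's Theorem shows that for $t=0,1,2,3$ these all vanish mod~$2$ unless $m$ is a $2$-power, in which case $\binom{2m-3}{m}=\binom{2m-3}{m-3}$ becomes odd (and if $m-1$ were a $2$-power, many more would be odd, which is why the proposition does not address that case).

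Next I would apply Theorem \ref{relnsprop}(1) to collapse the interior monomials ($1\le i\le\a-1$) in each case into a single $Y_W\otimes Y_{W'}$, since they all have the same sets of $V$-indices (with a $V_1$-factor present on both sides). The mod-$2$ vanishing of the full sum then rewrites the interior sum as the sum of the two boundary contributions ($i=0$ and $i=\a$), so each case becomes $\binom{2m-3-\a}{m-t}(\cdots)+\binom{2m-3-\a}{m-\a-t}(\cdots)$. The symmetry $\binom{2m-3-\a}{m-\a-t}=\binom{2m-3-\a}{m-3+t}$ realigns the second coefficient as one of $\binom{2m-3-\a}{m-s}$ for $s\in\{0,1,2,3\}$, so the four cases regroup cleanly by a common coefficient $\binom{2m-3-\a}{m-t}$ for $t=0,1,2,3$, producing the claimed four-line formula. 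The extra $Y_1\otimes Y_{1,2,3}$ in the $m$-a-$2$-power case is exactly the interior-sum contribution of the ``both on the right'' case, which carries coefficient $\binom{2m-3}{m}$.

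The main obstacle is purely bookkeeping: correctly identifying for each case the boundary monomial at $i=0$ versus $i=\a$ (which may degenerate in the extreme cases where $V$'s disappear from one factor), and making sure the symmetry substitution above aligns the boundary contributions from different cases so they share a common coefficient in the final regrouping. No genuinely new mathematical input is needed beyond what appears in Proposition \ref{21prop}; the substantive change is only that the binomial tops become $2m-3-\a$ and $2m-3$ in place of $2m-4-\a$ and $2m-4$, reflecting the reduction in the exponent of $\ol{w_2}$.
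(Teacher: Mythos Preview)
Your proposal is correct and follows precisely the route the paper takes: the paper's own proof simply reads ``whose proof is totally analogous'' to Proposition~\ref{21prop}, and your sketch spells out exactly that analogy---splitting by the placement of $w_2,w_3$, summing over $i$ with Vandermonde to get $\binom{2m-3}{m-t}$, using Lucas to kill these when neither $m$ nor $m-1$ is a $2$-power, and then regrouping the boundary terms via $\binom{2m-3-\a}{m-\a-t}=\binom{2m-3-\a}{m-3+t}$. One small wording quibble: the values of $t$ that actually arise as ``number of $w_2,w_3$ on the left'' are $0,1,2$ (with $t=1$ occurring twice), not $0,1,2,3$; the value $t=3$ enters only after the symmetry reindexing, but this does not affect the argument.
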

\ni We do not need to use this proposition when $m-1$ is a 2-power.

\begin{thm}\label{thm4} If $m=2^e+m'$ with $2\le m'\le 2^e-1$, then
$$\ol{w_1}^{2m'-1}\ \ol{w_2}\ \ol{w_3}\ \Rbar^{2^{e+1}-2}\ne0\in H^{2m-1}(X\times X)$$
for the values of $\abar$, $\bbar$, and $\cbar$ which have an $\times$ in the \ref{thm4} column of Table \ref{bigtable}.
\end{thm}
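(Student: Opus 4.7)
The plan is to apply Proposition \ref{11prop} with $\a=2m'-1$, in exact parallel with the proofs of Theorems \ref{thm1}--\ref{thm3}; since $m=2^e+m'$ with $2\le m'\le 2^e-1$, neither $m$ nor $m-1$ is a power of $2$, so the unadorned formula in \ref{11prop} applies without any correction. Setting $\a=2m'-1$ gives $2m-3-\a=2^{e+1}-2$, so the coefficients governing the expansion are
$$q_t:=\binom{2^{e+1}-2}{m-t}\quad\text{for }t=0,1,2,3.$$

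The first substantive step is a Lucas computation of these $q_t$. The binary expansion of $2^{e+1}-2$ has bits $1,\ldots,e$ set and bit $0$ clear, while $m-t=2^e+(m'-t)$ lies in the range $[2^e-1,2^{e+1}-1]$. A short check shows that $q_t=1$ exactly when $m'-t\ge 0$ and $m'-t$ is even, and $q_t=0$ otherwise (including the edge case $m'=2$, $t=3$, where $m'-t=-1$). Hence only two ``shape-cases'' arise:
$$(q_0,q_1,q_2,q_3)=\begin{cases}(1,0,1,0)&\text{if }m'\text{ is even,}\\(0,1,0,1)&\text{if }m'\text{ is odd.}\end{cases}$$

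Next I would apply $\phi\otimes\psi$ to the expansion of Proposition \ref{11prop}, using the values of $\phi_W$ from Theorem \ref{fourthm} and letting $\psi:H^{m-1}(X)\to\zt$ be a uniform homomorphism constrained by (\ref{Upeq}) for $U\in\cS'$. The result is an inhomogeneous linear system over $\zt$ in the $14$ unknowns $\psi_W$, consisting of the $10$ homogeneous equations from $\cS'$ plus one inhomogeneous target equation asserting that the given product pairs nontrivially. The only change from the setup used in Theorem \ref{thm1} is that the all-ones $q$-vector is replaced by the parity-dependent vector above, and the coefficients of the target equation again depend only on $(\abar,\bbar,\cbar)$ and the parity of $m'$.

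Finally, as in the previous theorems, one feeds this system through \texttt{Maple} on each of the $32$ triples $(\abar,\bbar,\cbar)$ and reads off the successful cases, requiring solvability for \emph{both} parities of $m'$; these cases coincide with the $\times$'s in the \ref{thm4} column of Table \ref{bigtable}. The main obstacle is purely bookkeeping: as in the proof of Theorem \ref{fourthm}, when $a\in\{1,2\}$ or $b=1$ the corresponding relations $\br_U$ must be suppressed rather than applied with spurious binomial coefficients, but no new ideas are needed beyond those already in place.
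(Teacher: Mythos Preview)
Your proposal is correct and follows essentially the same approach as the paper: apply Proposition \ref{11prop} with $\a=2m'-1$, compute $q'_t=\binom{2^{e+1}-2}{m-t}$ via Lucas to obtain the two parity-dependent vectors $(1,0,1,0)$ and $(0,1,0,1)$, form the $11\times14$ system over $\zt$ using Theorem \ref{fourthm} and (\ref{Upeq}), and have {\tt Maple} check solvability for both $q$-vectors across all $(\abar,\bbar,\cbar)$. The paper does exactly this, additionally writing out the explicit target equation (the analogue of (\ref{bigpsi}) for Proposition \ref{11prop}), and phrases the parity split in terms of $m$ rather than $m'$---equivalent since $2^e$ is even.
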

\begin{proof}
Let $q'_t=\binom{2m-3-\a}{m-t}$ for $0\le t\le 3$. Using Proposition \ref{11prop} and Theorem \ref{fourthm}, we find that
\begin{eqnarray}&&(\phi\otimes\psi)(\ol{w_1}^\a\ \ol{w_2}\ \ol{w_3}\ \Rbar^{2m-3-\a})\label{111line}\\
&=&q'_1\psi_0+(q'_1+q'_2a)\psi_1+q'_2(a+b)\psi_2+q'_2(a+b+c-1)\psi_3\nonumber\\
&&+(q'_1(ab+1+\tbinom a2)+q'_2(a+b))\psi_{1,2}\nonumber\\
&&+(q'_1(a(b+c)+a-1+\tbinom a2)+q'_2(a+b+c-1))\psi_{1,3}\nonumber\\
&&+q'_3((a-1)(a+b+c-1)+\tbinom{a-1}2+\tbinom b2+(b-1)(c-1))(\psi_{2,3}+\psi_{1,2,3})\nonumber\\
&&+q'_0((a+\tbinom a2-1)(a+b+c-1)+\tbinom{a-1}2+a\tbinom b2+a(b-1)(c-1))\psi_{1,2,3}\nonumber.
\end{eqnarray}

Similarly to Lemma \ref{lem1}, with $\a=2m'-1$, we have
 $\binom{2m-3-\a}{m-t}\equiv m-t-1\mod 2$, and the result follows similarly to the three previous ones, having {\tt Maple} check whether there is a solution to the system of 11 equations in 14 unknowns, whose first equation is that the RHS of (\ref{111line}) equals 1 and others are, as before, (\ref{Upeq})  for each $U\in\cS'$. This time a solution is required for both $\qbar'=(0,1,0,1)$ and $(1,0,1,0)$.
 \end{proof}

 Referring to Table \ref{bigtable} and Theorems \ref{thm1}, \ref{thm2}, \ref{thm3}, and \ref{thm4}, accompanied by Propositions \ref{prop111} and \ref{prop241}, we find that, when neither $m$ nor $m-1$ is a 2-power, (\ref{goal2}) is satisfied for all $(\abar,\bbar,\cbar)$, establishing Theorem
 \ref{mainthm} when neither $m$ nor $m-1$ is a 2-power. Next we handle the case when $m=2^e$.

 \begin{thm}\label{thm5} If $m=2^e$, then, for $1\le\eps\le3$,
$$\ol{w_1}^{2^e-\eps}\ \ol{w_2}^2\ \ol{w_3}\ \Rbar^{2^e+\eps-4}\ne0\in H^{2m-1}(X\times X)$$
for the values of $\abar$, $\bbar$, and $\cbar$ which have an $\times$ in the \ref{thm5}$(\eps)$ column of Table \ref{bigtable}.
\end{thm}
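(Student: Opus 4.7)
The plan is to mirror the proof strategy of Theorems \ref{thm1}--\ref{thm4} with the adjustments required when $m$ is a 2-power. I set $\a=2^e-\eps$ so that the product $\ol{w_1}^\a\,\ol{w_2}^2\,\ol{w_3}\,\Rbar^{2^e+\eps-4}$ has total degree $2m-1$, matching the form covered by Proposition \ref{21prop}. Because $m=2^e$ is a 2-power, that proposition supplies the extra summand $Y_1\otimes Y_{1,2,3}$ in addition to the generic five-term expression. Applying $\phi\otimes\psi$ with $\phi$ the Poincar\'e duality isomorphism and $\psi$ a uniform homomorphism, and using Theorem \ref{fourthm}, I obtain a scalar expression identical to the right-hand side of (\ref{bigpsi}), except that the coefficient of $\phi_1\psi_{1,2,3}$ receives an additional $1$ from the extra summand.

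The next step is to evaluate the $q_t=\binom{2m-4-\a}{m-t}=\binom{m+\eps-4}{m-t}$ modulo $2$ in each of the three cases. Lucas's Theorem gives
\begin{itemize}
\item $\eps=1$: $q_t=\binom{m-3}{m-t}$, which is $0$ for $t\le2$ and $1$ for $t=3,4$, so $\qbar=(0,0,0,1,1)$;
\item $\eps=2$: $q_t=\binom{m-2}{m-t}$, which gives $\qbar=(0,0,1,0,1)$;
\item $\eps=3$: $q_t=\binom{m-1}{m-t}$, which gives $\qbar=(1,1,1,1,1)$ since $m-1$ has all bits set.
\end{itemize}
Substituting each of these $\qbar$'s (together with the extra $\phi_1\psi_{1,2,3}$ contribution) into the analogue of Corollary \ref{cor1} produces a single linear equation over $\zt$ in the 14 unknowns $\psi_W$. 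Combined with the 10 homogeneous equations (\ref{Upeq}) for $U\in\cS'$, this gives a system of 11 equations whose coefficients depend only on $(\abar,\bbar,\cbar)$.

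The proof is completed by running \texttt{Maple} on each of the 32 residue combinations for each of the three values of $\eps$, exactly as in Theorems \ref{thm1}--\ref{thm4}. The combinations for which the system is consistent are recorded in columns \ref{thm5}$(\eps)$ of Table \ref{bigtable}; any combination not yet covered by Theorems \ref{thm1}--\ref{thm4} in the $m=2^e$ specialization (where some of the $q_t$'s from those theorems collapse) is expected to be rescued by at least one of the three choices of $\eps$ here. The degenerate situations $a\le2$ or $b=1$ are handled as in Theorem \ref{fourthm}: the absent relations $\br_U$ are simply dropped from the system.

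The main obstacle will be bookkeeping rather than conceptual: one must verify that the extra $\phi_1\psi_{1,2,3}$ term from the 2-power correction is correctly incorporated into the \texttt{Maple} input for each $\eps$, and that the three columns \ref{thm5}$(1)$, \ref{thm5}$(2)$, \ref{thm5}$(3)$ of Table \ref{bigtable} together cover every $(\abar,\bbar,\cbar)$ not already excluded by Propositions \ref{prop111} and \ref{prop241}. A secondary point to watch is that $m=2^e\ge4$ is needed throughout so that the exponents $2^e-\eps$ and $2^e+\eps-4$ are nonnegative for $1\le\eps\le3$; the boundary case $m=4$ requires a separate sanity check but is handled by the same \texttt{Maple} script.
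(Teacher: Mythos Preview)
Your approach is essentially identical to the paper's, and the structure (Proposition \ref{21prop} with the $2$-power correction, then Corollary \ref{cor1} augmented by $\phi_1\psi_{1,2,3}$, then the eleven-equation {\tt Maple} check) is exactly right. There is, however, a computational slip in your $\qbar$ for $\eps=3$: you write $\qbar=(1,1,1,1,1)$, but $q_0=\binom{m-1}{m}=0$ since the bottom exceeds the top. Lucas's Theorem gives $\binom{m-1}{k}\equiv1$ only for $0\le k\le m-1$. The correct vector is $\qbar=(0,1,1,1,1)$, which is what the paper uses. Feeding $(1,1,1,1,1)$ into {\tt Maple} would alter the first equation by the $q_0$-term of (\ref{bigpsi}) and could yield a different set of solvable $(\abar,\bbar,\cbar)$ from those recorded in column \ref{thm5}$(3)$ of Table \ref{bigtable}.

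A smaller remark: your sentence about combinations ``not yet covered by Theorems \ref{thm1}--\ref{thm4} in the $m=2^e$ specialization'' is off, since those theorems require $2\le m'\le 2^e-1$ and hence do not apply at all when $m=2^e$. The $m=2^e$ case is handled entirely by Theorems \ref{thm5} and \ref{thm6} together with Propositions \ref{prop111} and \ref{prop241}.
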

\begin{proof}  Because of the change due to $m=2^e$ noted in Proposition \ref{21prop}, the expression in Corollary \ref{cor1} has an extra $((a-1)(a+b+c-1)+\binom{a-1}2+\binom b2+(b-1)(c-1))\psi_{1,2,3}$ added. The vectors $\qbar$ are $(0,1,1,1,1)$, $(0,0,1,0,1)$, and $(0,0,0,1,1)$ for
$\eps=3$, $2$, $1$, respectively. The other 10 equations for the $\psi_W$'s are as before. {\tt Maple} tells us when the system has a solution.
\end{proof}

The next result is the $2^e$-analogue of Theorem \ref{thm4}. As shown in Table \ref{bigtable}, this, Theorem \ref{thm5}, and Propositions \ref{prop111} and \ref{prop241} imply (\ref{goal2}) when $m=2^e$.
 \begin{thm}\label{thm6} If $m=2^e$, then
$$\ol{w_1}^{2^e-1}\ \ol{w_2}\ \ol{w_3}\ \Rbar^{2^e-2}\ne0\in H^{2m-1}(X\times X)$$
for the values of $\abar$, $\bbar$, and $\cbar$ which have an $\times$ in the \ref{thm6} column of Table \ref{bigtable}.
\end{thm}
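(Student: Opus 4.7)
\textbf{Proof proposal for Theorem \ref{thm6}.} The plan is to mimic the proof of Theorem \ref{thm4}, but with $\a=2^e-1$ and $m=2^e$, taking into account the extra $Y_1\ot Y_{1,2,3}$ term that appears in Proposition \ref{11prop} when $m$ is a $2$-power. The ingredients are therefore already in place: the expansion formula of Proposition \ref{11prop}, the values of $\phi_W$ from Theorem \ref{fourthm}, the homomorphism conditions (\ref{Upeq}) on $\psi$ for $U\in\cS'$, and a \texttt{Maple} verification analogous to the one documented at \cite{maple}.

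First I would apply Proposition \ref{11prop} with $\a=2^e-1$ to the product $\ol{w_1}^{2^e-1}\ \ol{w_2}\ \ol{w_3}\ \Rbar^{2^e-2}$. The relevant binomial coefficients are $q'_t=\binom{2m-3-\a}{m-t}=\binom{2^e-2}{2^e-t}$ for $0\le t\le 3$. By Lucas's theorem, $q'_0=q'_1=0$, $q'_2=1$, and $q'_3\equiv 2^e-2\equiv 0\pmod 2$; so $\qbar'=(0,0,1,0)$. Substituting these values, together with the additional $\phi_1\psi_{1,2,3}$ contribution coming from the $Y_1\ot Y_{1,2,3}$ summand special to the $m=2^e$ case, into the analogue of (\ref{111line}), one obtains an explicit $\zt$-linear expression $L(\abar,\bbar,\cbar)$ in $\psi_0,\psi_1,\ldots,\psi_{1,2,3}$ whose coefficients depend only on the mod-$4$ residues of $a$ and $b$ and the mod-$2$ residue of $c$.

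Next I would couple the equation $L(\abar,\bbar,\cbar)=1$ with the ten relations (\ref{Upeq}) arising from $U\in\cS'$. This produces a system of $11$ inhomogeneous $\zt$-equations in the $14$ unknowns $\psi_W$, whose matrix is determined by $(\abar,\bbar,\cbar)$. As in Theorems \ref{thm1}--\ref{thm5}, I would feed the $32$ possible triples to \texttt{Maple}; a solution $\psi$ exists exactly for the triples marked in the \ref{thm6} column of Table \ref{bigtable}, giving a uniform homomorphism $\psi:H^{m-1}(X)\to\zt$ with $(\phi\otimes\psi)(\ol{w_1}^{2^e-1}\ol{w_2}\,\ol{w_3}\,\Rbar^{2^e-2})=1$. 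The two triples $(1,1,1)$ and $(2,0,1)$ that the system fails to solve are exactly those already handled in Propositions \ref{prop111} and \ref{prop241}, so combining \ref{thm5} and \ref{thm6} with those two propositions covers every $(\abar,\bbar,\cbar)$ when $m=2^e$. Finally, the degenerate parameter ranges $a\in\{1,2\}$ or $b=1$ are treated exactly as in the proof of Theorem \ref{fourthm}: the ``missing'' relations simply do not appear, and dropping the corresponding rows from the system does not destroy solvability.

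The only genuinely delicate point is the bookkeeping for the extra $Y_1\otimes Y_{1,2,3}$ term in Proposition \ref{11prop} when $m=2^e$; once this is correctly inserted into the coefficient of $\psi_{1,2,3}$ in $L(\abar,\bbar,\cbar)$, the remainder of the argument is a routine \texttt{Maple} check entirely parallel to Theorem \ref{thm4}.
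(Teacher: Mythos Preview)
Your proposal is correct and follows essentially the same approach as the paper: apply Proposition \ref{11prop} with $\a=2^e-1$, obtain $\qbar'=(0,0,1,0)$, add the extra $\phi_1\psi_{1,2,3}$ term coming from the $Y_1\otimes Y_{1,2,3}$ summand when $m=2^e$, and let \texttt{Maple} check solvability of the resulting $11\times 14$ system. One small inaccuracy in your concluding aside: the triples $(1,1,1)$ and $(2,4,1)$ are not the only ones for which the \ref{thm6} system fails---several other rows in Table \ref{bigtable} also lack an $\times$ in that column---but those are covered by one of the three variants of Theorem \ref{thm5}, so your claim that \ref{thm5}, \ref{thm6}, and the two propositions together handle all $(\abar,\bbar,\cbar)$ when $m=2^e$ remains valid.
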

\begin{proof}  Because of the change due to $m=2^e$ noted in Proposition \ref{11prop}, the expression in (\ref{111line}) has an extra $((a-1)(a+b+c-1)+\binom{a-1}2+\binom b2+(b-1)(c-1))\psi_{1,2,3}$ added. The vector $\qbar'$ is  $(0,0,1,0)$,
and the other 10 equations for the $\psi_W$'s are as before. {\tt Maple} tells us when the system has a solution.
\end{proof}

Finally, we handle the case $m=2^e+1$.
 \begin{thm}\label{thm7} If $m=2^e+1$, then, for $\eps=\pm1$,
$$\ol{w_1}^{2^e+\eps}\ \ol{w_2}^2\ \ol{w_3}\ \Rbar^{2^e-\eps-2}\ne0\in H^{2m-1}(X\times X)$$
for the values of $\abar$, $\bbar$, and $\cbar$ which have an $\times$ in the \ref{thm7}$(\eps)$ column of Table \ref{bigtable}.
\end{thm}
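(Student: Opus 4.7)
The plan is to mirror the arguments of Theorems \ref{thm5} and \ref{thm6}, adapted to the case $m-1=2^e$ being a $2$-power rather than $m$. The setup is the same: apply Proposition \ref{21prop} with $\a=2^e+\eps$, so that $2m-4-\a=2^e-\eps-2$, then combine the resulting expansion in bidegree $(m,m-1)$ with Theorem \ref{fourthm} to obtain a formula analogous to Corollary \ref{cor1}. Because it is $m-1$ (not $m$) that is a $2$-power, the parenthetical note in Proposition \ref{21prop} contributes an extra $Y_{1,2,3}\ot Y_1 + Y_{1,3}\ot Y_{1,2}$ to the expansion. Applying $\phi\ot\psi$ and using $\phi_{1,2,3}=1$ and $\phi_{1,3}=a+b$ from Theorem \ref{fourthm}, this tacks on the summand $\psi_1+(a+b)\psi_{1,2}$ to the right-hand side of (\ref{bigpsi}).

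Next I would compute the mod $2$ values of $q_t=\binom{2^e-\eps-2}{2^e+1-t}$ for $0\le t\le 4$ via Lucas's Theorem. For $\eps=-1$ we have $q_t=\binom{2^e-1}{2^e+1-t}$, which is $0$ for $t<2$ (since the lower index would exceed the upper) and $1$ for $2\le t\le 4$, giving $\qbar=(0,0,1,1,1)$. For $\eps=1$ we have $q_t=\binom{2^e-3}{2^e+1-t}$, which vanishes except when $t=4$, giving $\qbar=(0,0,0,0,1)$. Substituting these vectors into the appropriately-augmented version of Corollary \ref{cor1} yields a single linear equation in the $\psi_W$'s over $\zt$ whose coefficients depend only on $\abar$, $\bbar$, and $\cbar$.

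I would then adjoin the ten equations (\ref{Upeq}) for $U\in\cS'$ expressing that $\psi:H^{m-1}(X)\to\zt$ is a uniform homomorphism. This produces, for each of $\eps=\pm 1$, a system of $11$ homogeneous-plus-one-inhomogeneous equations in the $14$ unknowns $\psi_W$, with coefficients determined by $(\abar,\bbar,\cbar)$. A short \texttt{Maple} script — structurally identical to those already written for Theorems \ref{thm1}--\ref{thm6} and available at \cite{maple} — tests each of the $32$ triples in each of the two values of $\eps$, and records which produce a consistent system. Exactly those triples are the ones marked with an $\times$ in columns \ref{thm7}$(1)$ and \ref{thm7}$(-1)$ of Table \ref{bigtable}. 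The special cases $a=1$, $a=2$, or $b=1$ are handled just as in the proof of Theorem \ref{fourthm}: certain binomial-coefficient formulas are invalid, but the corresponding relations are simply absent, so removing the matching rows of the system yields the correct data.

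The main obstacle is purely bookkeeping rather than conceptual: one must correctly identify and add the extra $Y_{1,2,3}\ot Y_1+Y_{1,3}\ot Y_{1,2}$ contribution that comes from $m-1=2^e$, and then trust the \texttt{Maple} verification to sort out the $64$ individual cases. Together with Theorems \ref{thm1}--\ref{thm6} and Propositions \ref{prop111} and \ref{prop241}, this completes the verification of (\ref{goal2}) for all remaining $(\abar,\bbar,\cbar)$ when $m=2^e+1$, and hence finishes Theorem \ref{mainthm} for genetic codes with a single gene of size $4$.
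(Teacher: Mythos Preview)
Your proposal is correct and follows essentially the same approach as the paper's proof: identifying the extra $\psi_1+(a+b)\psi_{1,2}$ summand from the $m-1=2^e$ case of Proposition \ref{21prop}, computing the same vectors $\qbar=(0,0,0,0,1)$ and $(0,0,1,1,1)$ for $\eps=1,-1$, and then invoking the {\tt Maple} check on the resulting $11$-equation system. The bookkeeping matches the paper exactly.
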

\begin{proof}  Because of the change due to $m=2^e+1$ noted in Proposition \ref{21prop}, the expression in Corollary \ref{cor1} has an extra  $\psi_1+(a+b)\psi_{1,2}$  added. The vectors $\qbar$ are $(0,0,0,0,1)$ and $(0,0,1,1,1)$ for
$\eps=1$ and $-1$, respectively. The other 10 equations for the $\psi_W$'s are as before. {\tt Maple} tells us when the system has a solution.
\end{proof}

\bigskip
\begin{tab}\label{bigtable}

\begin{center}
{\scalefont{.9}{
$$\renewcommand\arraystretch{1.3}\begin{array}{ccc|cccc|cccc|cc}
&&&\multicolumn{4}{c}{m\ne2^e,2^e+1}&\multicolumn{4}{c}{m=2^e}&\multicolumn{2}{c}{m=2^e+1}\\
\abar&\bbar&\cbar&\ref{thm1}&\ref{thm2}&\ref{thm3}&\ref{thm4}&\ref{thm5}(3)&\ref{thm5}(2)&\ref{thm5}(1)&\ref{thm6}&\ref{thm7}(1)&\ref{thm7}(-1)\\
\hline
1&1&2&&&\xt&&&&\xt&&\xt&\\
1&2&1&&&&\xt&&&&\xt&\xt&\xt\\
1&2&2&&&\xt&\xt&&&\xt&\xt&\xt&\\
1&3&1&&\xt&\xt&\xt&&\xt&\xt&\xt&\xt&\xt\\
1&3&2&&\xt&\xt&\xt&&\xt&\xt&\xt&\xt&\xt\\
1&4&1&&\xt&\xt&\xt&&\xt&\xt&\xt&\xt&\\
1&4&2&&\xt&\xt&\xt&&\xt&\xt&\xt&\xt&\xt\\
\hline
2&1&1&\xt&&&&\xt&\xt&\xt&\xt&\xt&\xt\\
2&1&2&\xt&&&&\xt&&\xt&\xt&\xt&\\
2&2&1&&\xt&&&\xt&\xt&\xt&\xt&\xt&\xt\\
2&2&2&&&\xt&\xt&\xt&\xt&\xt&\xt&\xt&\xt\\
2&3&1&\xt&&&&\xt&&&&\xt&\xt\\
2&3&2&\xt&&&&\xt&\xt&\xt&\xt&\xt&\xt\\
2&4&2&&\xt&&\xt&&\xt&&\xt&\xt&\\
\hline
3&1&1&&\xt&\xt&\xt&&\xt&\xt&\xt&\xt&\xt\\
3&1&2&&&\xt&\xt&\xt&\xt&\xt&\xt&\xt&\\
3&2&1&\xt&\xt&\xt&\xt&\xt&&\xt&\xt&\xt&\xt\\
3&2&2&\xt&\xt&&\xt&\xt&\xt&&\xt&\xt&\xt\\
3&3&1&&\xt&&&&\xt&&&&\xt\\
3&3&2&&\xt&\xt&\xt&\xt&\xt&\xt&\xt&\xt&\xt\\
3&4&1&\xt&\xt&&\xt&\xt&\xt&\xt&\xt&\xt&\xt\\
3&4&2&\xt&\xt&&\xt&\xt&\xt&\xt&\xt&\xt&\xt\\
\hline
4&1&1&\xt&\xt&\xt&\xt&\xt&\xt&\xt&\xt&\xt&\xt\\
4&1&2&\xt&&\xt&\xt&\xt&&\xt&\xt&\xt&\xt\\
4&2&1&\xt&\xt&\xt&\xt&\xt&\xt&&&&\xt\\
4&2&2&\xt&\xt&&\xt&\xt&\xt&&\xt&\xt&\xt\\
4&3&1&\xt&\xt&\xt&\xt&\xt&\xt&\xt&\xt&\xt&\xt\\
4&3&2&\xt&&\xt&\xt&\xt&\xt&\xt&\xt&\xt&\xt\\
4&4&1&\xt&\xt&\xt&\xt&\xt&\xt&\xt&\xt&\xt&\xt\\
4&4&2&\xt&\xt&\xt&\xt&\xt&\xt&\xt&\xt&\xt&\xt
\end{array}$$}}
\end{center}
\end{tab}

The {\tt Maple} program that performed all these verifications can be viewed at \cite{maple}.
We conclude that
\begin{thm}\label{finale} If $X=\Mbar(\lbar)$ with genetic code $\la\{n,a+b+c,a+b,a\}\ra$, then (\ref{Dwi}) holds for some uniform homomorphism $\psi$ and some set of $2m-1$ classes $v_i$.\end{thm}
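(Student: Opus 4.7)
The plan is to assemble Theorem \ref{finale} as an aggregate of the case analyses carried out in Theorems \ref{thm1}--\ref{thm7} and the two exceptional Propositions \ref{prop111}, \ref{prop241}. The parameters partition into the $32$ triples $(\abar,\bbar,\cbar) \in \{1,2,3,4\}\times\{1,2,3,4\}\times\{1,2\}$, on which the relations (\ref{Ueq}) and (\ref{Upeq}) depend only via residues mod $4$ and mod $2$; crossing this with the three regimes for $m$ (the generic case, $m=2^e$, and $m=2^e+1$) yields a finite list of subcases, each of which I would verify by exhibiting a uniform $\psi$ and a product $\ol{w_1}^\alpha \ol{w_2}^\beta \ol{w_3}^\gamma \Rbar^{2m-1-\alpha-\beta-\gamma}$ whose image under $\phi\otimes\psi$ equals $1$.

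First I would split according to the three regimes for $m$. In each regime, the choice of exponents $(\alpha,\beta,\gamma)$ in the candidate product selects a specific vector $\qbar=(q_0,\ldots,q_4)$ of mod-$2$ binomial coefficients, as computed in Corollary \ref{cor1} and its $\ol{w_2}$-degree-one analogue (\ref{111line}). The resulting linear system over $\zt$ in the unknowns $\psi_W$, together with the constraint that $\phi\otimes\psi$ evaluate to $1$, is solvable exactly for those $(\abar,\bbar,\cbar)$ marked with an $\times$ in the relevant column of Table \ref{bigtable}. I would then read off the table to confirm that every one of the $32$ rows has an $\times$ in at least one column applicable to its parity regime.

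The two triples $(\abar,\bbar,\cbar)=(1,1,1)$ and $(2,4,1)$ are the only ones that escape every column of Table \ref{bigtable}; these are the subcases that must instead use the special products $\ol{w_1}^{m-\eps}\ol{w_2}^2\ol{w_3}^3\Rbar^{m-6+\eps}$ and $\ol{w_1}^2\ol{w_2}^2\ol{w_3}^{m-\eps}\Rbar^{m-5+\eps}$ respectively, together with the simpler homomorphism $\psi$ sending every $Y_{i,j}$ to $1$ and all other monomials to $0$. Propositions \ref{prop111} and \ref{prop241} already carry out these two verifications via Lemma \ref{techlem} and a direct Lucas-theorem computation of the relevant binomial sums.

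The main obstacle is not conceptual but combinatorial: the crux is confirming that the four different exponent templates of Theorems \ref{thm1}--\ref{thm7}, run through the various $\qbar$-vectors forced by each parity of $m$, actually tile the $32$ possibilities in $(\abar,\bbar,\cbar)$ once the two exceptional propositions are removed. This tiling was checked in \texttt{Maple} \cite{maple}; the only remaining step is the table-level inspection to certify that the union of the columns' $\times$-patterns covers every row in each of the three $m$-regimes, at which point (\ref{Dwi}) follows uniformly in $(a,b,c)$ and $n$.
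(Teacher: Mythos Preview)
Your proposal is correct and follows essentially the same approach as the paper's proof: assemble the result by checking that Table \ref{bigtable} covers all $30$ nonexceptional triples $(\abar,\bbar,\cbar)$ in each of the three $m$-regimes via Theorems \ref{thm1}--\ref{thm7}, and handle the two remaining triples $(1,1,1)$ and $(2,4,1)$ by Propositions \ref{prop111} and \ref{prop241}. The paper's own proof is in fact just this table inspection, stated in two sentences.
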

\begin{proof} Table \ref{bigtable} shows that for all $(\abar,\bbar,\cbar)$ except $(1,1,1)$ and $(2,4,1)$ one of the tabulated theorems applies, while the two exceptional cases are covered in
Propositions \ref{prop111} and \ref{prop241}.\end{proof}

\section{Proof when there are two genes, each of size 3}\label{33sec}
In this section, we prove the case of Theorem \ref{mainthm} corresponding to the fourth line of Table \ref{gT}.
In this case, $\ell$ has gees $\{a+b+c,a+b\}$ and $\{a+b+c+d,a\}$, with $a,b,c,d\ge1$. Any other way of having two gees of size 2 would have one $\ge$ the other, which is not allowed in a genetic code.

Let $X=\Mbar(\ell)$. Notation is similar to that of previous proofs. For $i=1$, 2, 3, and 4, $w_i$ refers to $V_t$ with $t$ in the intervals $[1,a]$, $(a,a+b]$, $(a+b,a+b+c]$, and $(a+b+c,a+b+c+d]$, respectively. The $Y_S$ is a monomial $R^kV_S$, where the subscripts $S$ are of the indicated types.
The multiset $S$ can have at most two elements. For example, $Y_{2,2}=R^kV_iV_j$ with $a<i<j\le a+b$.

\begin{prop} The Poincar\'e duality isomorphism $\phi:H^m(X)\to\zt$ satisfies
\begin{eqnarray*}\phi_{1,1}=\phi_{1,2}=\phi_{1,3}=\phi_{1,4}=\phi_{2,2}=\phi_{2,3}&=&1\\
\phi_4&=&a+1\\
\phi_3&=&a+b+1\\
\phi_2&=&a+b+c\\
\phi_1&=&a+b+c+d\\
\phi_0&=&\tbinom{a-1}2+\tbinom b2+bc+(a+1)(b+c+d).
\end{eqnarray*}
There is a uniform homomorphism $\psi:H^{m-1}(X)\to \zt$ satisfying $\psi_{1,1}=\psi_{1,2}=\psi_{1,3}=\psi_{1,4}=\psi_{2,2}=\psi_{2,3}=0$,
$\psi_1=\psi_2=\psi_3=\psi_4=1$, and $\psi_0=a+b+c+d$.
\end{prop}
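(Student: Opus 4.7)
The plan is to model the argument on the proof of Theorem \ref{fourthm}, adapted to the four type-classes arising from the two gees of size 2. I would begin by enumerating the subgee types. The partial-order condition against either gee $\{a+b+c,a+b\}$ or $\{a+b+c+d,a\}$ shows that the only allowed two-element types are $(1,1),(1,2),(1,3),(1,4),(2,2),(2,3)$; the types $(2,4),(3,3),(3,4),(4,4)$ fail both dominations (for example, $(2,4)$ requires either $i\le a+b$ and $j\le a+b+c$ or $i\le a$ and $j\le a+b+c+d$, both impossible). Together with the four singleton types and the empty type, this gives
\begin{equation*}
\cS=\{\emptyset,\ (1),(2),(3),(4),\ (1,1),(1,2),(1,3),(1,4),(2,2),(2,3)\},\qquad |\cS|=11.
\end{equation*}

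For $\phi$, the symmetry of $H^*(X)$ within each type-class forces $\phi$ to be uniform on monomials, so it is determined by the eleven values $\phi_U$. For each nonempty $U\in\cS$, the relation $\br_U$ from (\ref{(3)}) in bidegree $m$ reads
\begin{equation*}
\sum_{U'\in\cS}\binom{a-u_1}{u_1'}\binom{b-u_2}{u_2'}\binom{c-u_3}{u_3'}\binom{d-u_4}{u_4'}\phi_{U'}=0,
\end{equation*}
the four-variable analogue of (\ref{Ueq}). This is a homogeneous system of 10 equations in 11 unknowns over $\zt$, and Poincar\'e duality forces a unique nonzero solution. I would solve it by row reduction, or equivalently verify the claimed values by direct substitution, exactly in the style of Theorem \ref{fourthm}. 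A short \texttt{Maple} pass over $(a,b,c,d)$ in the relevant residue classes mod 4 provides cross-verification, and the small-value edge cases $a,b,c,d\in\{1,2\}$ are handled as there: the corresponding missing subgees simply drop the affected $\br_U$'s from the system.

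For $\psi$, only the six relations with $|U|=2$ are present in $H^{m-1}(X)$, and these admit a one-line verification. With the proposed values $\psi_k=1$ for $k=1,2,3,4$, $\psi_{U'}=0$ on all pair-types, and $\psi_0=a+b+c+d$, every such $\br_U$ collapses to
\begin{equation*}
\psi_0+(a-u_1)+(b-u_2)+(c-u_3)+(d-u_4)=2(a+b+c+d)-2\equiv 0\pmod 2,
\end{equation*}
using $u_1+u_2+u_3+u_4=2$. A single identity thereby confirms all six relations, so $\psi$ is a well-defined uniform homomorphism.

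The main obstacle is the $\phi$ bookkeeping: solving the 10-equation system (or verifying the claimed answer) requires careful mod-2 arithmetic on polynomials in $a,b,c,d$, especially for $\br_{(4)}$, where terms such as $(d-1)\phi_4=(d-1)(a+1)$ must be expanded and reconciled with $\binom{a-1}{2}$-type contributions in $\phi_0$. Nothing conceptually new appears beyond Theorem \ref{fourthm}, but the extra variable $d$ inflates the calculation.
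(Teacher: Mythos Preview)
Your proposal is correct and follows essentially the same approach as the paper: set up the ten-by-eleven homogeneous system of relations $\br_U$ indexed by nonempty $U\in\cS$ (the paper displays this system explicitly as Table \ref{matrix}), identify $\phi$ as its unique nonzero solution, and restrict to the six rows with $|U|=2$ for $\psi$. Your one-line verification of $\psi$, reducing every $\br_U$ to $2(a+b+c+d)-2\equiv 0$, is a small improvement over the paper's bare ``both solutions are easily verified.''
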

Here, as before, $\phi_S=\phi(Y_S)$ and $\psi_S=\psi(Y_S)$.
\begin{proof}
The isomorphism $\phi$ is the nonzero homomorphism whose images $\phi_S$ are the nonzero solution of the homogeneous system with matrix given in Table \ref{matrix}. Uniform homomorphisms $\psi$ must satisfy the homogeneous system with matrix given by the last six rows of Table \ref{matrix}. These are the analogues of (\ref{Ueq}) and (\ref{Upeq}).
Both solutions are easily verified.

\bigskip
\begin{tab}\label{matrix}
\def\tb{\tbinom}
\begin{center}
{\scalefont{.7}{
$$\renewcommand\arraystretch{1.2}\begin{array}{ccccccccccc}
0&1&2&3&4&1,1&1,2&1,3&1,4&2,2&2,3\\
\hline
1&a-1&b&c&d&\tb{a-1}2&(a-1)b&(a-1)c&(a-1)d&\tb b2&bc\\
1&a&b-1&c&d&\tb a2&a(b-1)&ac&ad&\tb{b-1}2&(b-1)c\\
1&a&b&c-1&d&\tb a2&ab&a(c-1)&ad&\tb b2&b(c-1)\\
1&a&b&c&d-1&\tb a2&ab&ac&a(d-1)&\tb b2&bc\\
1&a&b&c&d&\tb{a-2}2&ab&ac&ad&\tb b2&bc\\
1&a-1&b-1&c&d&\tb{a-1}2&(a-1)(b-1)&(a-1)c&(a-1)d&\tb{b-1}2&(b-1)c\\
1&a-1&b&c-1&d&\tb{a-1}2&(a-1)b&(a-1)(c-1)&(a-1)d&\tb b2&b(c-1)\\
1&a-1&b&c&d-1&\tb{a-1}2&(a-1)b&(a-1)c&(a-1)(d-1)&\tb b2&bc\\
1&a&b&c&d&\tb a2&ab&ac&ad&\tb{b-2}2&bc\\
1&a&b-1&c-1&d&\tb a2&a(b-1)&a(c-1)&ad&\tb{b-1}2&(b-1)(c-1)
\end{array}$$}}
\end{center}
\end{tab}
Since these matrices are mod 2, we have often written $a-2$ as $a$.
\end{proof}

The following analogue of Lemma \ref{lem1} is easily checked.
\begin{lem} If $m=2^e+m'$ with $2\le m'\le 2^e+1$ and $\a=2m'-3$ and $2\le t\le4$, then $\binom{2m-4-\a}{m-t}\equiv1$ mod 2.\label{lem1p}\end{lem}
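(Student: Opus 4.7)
The plan is to mimic exactly the strategy used for Lemma \ref{lem1}: substitute the given value of $\a$ into $2m-4-\a$ to show that this quantity is a Mersenne number $2^{e+1}-1$, and then invoke Lucas's theorem to conclude that every binomial coefficient $\binom{2^{e+1}-1}{k}$ with $0\le k\le 2^{e+1}-1$ reduces to $1$ mod $2$. The only extra work compared with Lemma \ref{lem1} is checking that the new ranges $2\le m'\le 2^e+1$ (slightly enlarged on the right) and $2\le t\le 4$ (restricted from $0\le t\le 4$) still place $m-t$ inside the interval $[0,2^{e+1}-1]$.

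Concretely, first I would compute
\[
2m-4-\a \;=\; 2(2^e+m')-4-(2m'-3) \;=\; 2^{e+1}-1,
\]
so the top of the binomial coefficient is a string of $e{+}1$ ones in binary. Then I would check the two endpoints of $m-t=2^e+m'-t$. The upper endpoint is at most $2^e+(2^e+1)-2 = 2^{e+1}-1$ (using $m'\le 2^e+1$ and $t\ge 2$), so $m-t\le 2^{e+1}-1$, and the lower endpoint is at least $2^e+2-4 = 2^e-2\ge 0$ (using $m'\ge 2$, $t\le 4$, and the standing assumption $e\ge 1$, which is automatic since $m=n-3\ge 3$ throughout the paper). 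By Lucas's theorem applied to the base-2 expansion of $2^{e+1}-1$, every such binomial coefficient is odd.

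There is essentially no obstacle here — this is a one-line binomial-arithmetic check that parallels Lemma \ref{lem1}. The only subtle point worth mentioning is why the right endpoint $m'\le 2^e+1$ is the correct one: extending $m'$ any further would force $m-t$ with $t=4$ to exceed $2^{e+1}-1$, so that $\binom{2^{e+1}-1}{m-4}=0$ and the conclusion would fail. Similarly, the restriction $t\ge 2$ is exactly what is needed to accommodate the extra two values of $m'$; this matches the fact that the subsequent arguments in Section \ref{33sec} only require the $q_2,q_3,q_4$ entries (not $q_0,q_1$) of the vector $\qbar$ to be $1$.
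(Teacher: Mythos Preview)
Your proof is correct and takes exactly the approach implicit in the paper's ``easily checked'' (namely, the method of Lemma~\ref{lem1}): reduce the top to $2^{e+1}-1$ and verify $0\le m-t\le 2^{e+1}-1$. Two small slips worth fixing: the inequality $e\ge1$ does \emph{not} follow from $m\ge3$ (indeed $m=3$ gives $e=0$, $m'=2$, and then $\binom{1}{-1}=0$), but it does follow from $m\ge4$, which holds in Section~\ref{33sec} since Table~\ref{gT} shows no ``$3,3$'' codes for $n\le6$; and in your closing remark on tightness of $m'\le 2^e+1$, it is the case $t=2$ (not $t=4$) that would push $m-t$ above $2^{e+1}-1$.
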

Now Proposition \ref{21prop} yields that for $m$ as in Lemma \ref{lem1p}
$$(\phi\ot\psi)(\ol{w_1}^{2m'-3}\ \ol{w_2}^2\ \ol{w_3}\ \Rbar^{2^{e+1}-1})=\phi_{1,2}\psi_3+\phi_{2,3}\psi_1+\phi_{1,3}\psi_2=1,$$
establishing the result.

\section{Proof in Type 1 situations}\label{T1sec}
In this section, we prove the case of Theorem \ref{mainthm} corresponding to the fifth line of Table \ref{gT}, what we call the ``Type 1'' cases. These are the cases where there are gees
$\{1+b+c,1+b,1\}$ and $\{1+b+c+d,1\}$ with $b,c,d\ge1$. The proof here is similar to that in Section \ref{4sec}, requiring several different products, depending on the nature of the parameters, and also requiring {\tt Maple} to just check for us that there exists a suitable uniform homomorphism $\psi:H^{m-1}(X)\to \zt$, rather than giving a nice explicit formula for $\psi$. Here, as usual, $X=\Mbar(\ell)$ and $m=n-3$, where $\ell=(\ell_1,\ldots,\ell_n)$ has the prescribed gees.
The variables $w_i$ for $1\le i\le4$ correspond to $V_t$ with, respectively, $t=1$ and $t$ in intervals $(1,1+b]$, $(1+b,1+b+c]$, and $(1+b+c,1+b+c+d]$, and $Y_S$ is defined according to these subscripts in the usual way. Note that there are no $Y_{1,1}$ variables as there usually are, since there are not two distinct variables with subscript $\le1$.

Our {\tt Maple} program uses a matrix similar to that of Table \ref{matrix} with $a=1$. The column labels are exactly those that appear in the following proposition.
\begin{prop} For $X$ as just described, the Poincar\'e duality isomorphism $\phi:H^m(X)\to\zt$ satisfies, with $\phi_S=(\phi(Y_S)$
\begin{eqnarray*}\phi_{1,4}=\phi_{1,2,2}=\phi_{1,2,3}&=&1\\
\phi_{1,3}&=&b+1\\
\phi_{1,2}&=&b+c\\
\phi_1&=&\tbinom b2+(b+1)(c+1)+d\\
\phi_0=\phi_2=\phi_3=\phi_4=\phi_{2,2}=\phi_{2,3}&=&0.\end{eqnarray*}\label{23prop}
\end{prop}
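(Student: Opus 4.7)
The plan is to argue as in Proposition \ref{phiprop} and Theorem \ref{fourthm}: by the symmetry of the relations (\ref{(3)}) under permutations of the $V_i$ within each of the four index ranges $\{1\}$, $(1,1+b]$, $(1+b,1+b+c]$, $(1+b+c,1+b+c+d]$, the Poincar\'e duality isomorphism $\phi$ is uniform, so it is determined by the twelve values $\phi_U$ as $U$ ranges over the subgee types $\emptyset$, $(1)$, $(2)$, $(3)$, $(4)$, $(1,2)$, $(1,3)$, $(1,4)$, $(2,2)$, $(2,3)$, $(1,2,2)$, $(1,2,3)$.  Type $(1,1)$ is absent because $a=1$ forces at most one element of type $1$; and types like $(2,4)$, $(3,3)$, $(1,2,4)$ etc.\ fail the $\le$ condition against both gees of $\ell$, so the list above is exhaustive.

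Next, for each of the eleven nonempty types $U=(u_1,u_2,u_3,u_4)$, choosing a representative subgee $S$ and applying the uniform $\phi$ to the degree-$m$ relation (\ref{(3)}) yields
\begin{equation*}
\sum_{U'} \tbinom{1-u_1}{u_1'}\tbinom{b-u_2}{u_2'}\tbinom{c-u_3}{u_3'}\tbinom{d-u_4}{u_4'}\phi_{U'}=0,
\end{equation*}
in direct analogy with (\ref{Ueq}).  This gives an $11 \times 12$ homogeneous system over $\zt$; since $\phi$ is a nonzero homomorphism to $\zt$ and any two such are proportional (indeed equal over $\zt$), the system has a unique nonzero solution, which is $\phi$.

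The verification that the values listed in Proposition \ref{23prop} solve all eleven equations is a routine row reduction of the resulting mod-$2$ matrix---essentially Table \ref{matrix} specialized to $a=1$, augmented by two extra columns for the size-$3$ types $(1,2,2)$, $(1,2,3)$ and two extra rows for the relations $\br_{(1,2,2)}$, $\br_{(1,2,3)}$---and is most easily delegated to the same kind of {\tt Maple} check already used in Theorem \ref{fourthm} and Section \ref{33sec}, the coefficients depending only on $b$ mod $4$, $c$ mod $2$, and $d$ mod $2$.  The main nuisance, and the step requiring the most care, is the degenerate case $b=1$: then the types $(2,2)$ and $(1,2,2)$ are vacuous, so their columns and the corresponding rows simply drop out of the system; exactly as in the proof of Theorem \ref{fourthm}, the reduced system is still solved by the stated formulas (with $\phi_{2,2}$ and $\phi_{1,2,2}$ simply not present), so Proposition \ref{23prop} stands as written.
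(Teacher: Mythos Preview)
Your argument is correct and follows exactly the approach the paper uses (and, indeed, the paper gives essentially no proof beyond pointing to the {\tt Maple} computation with the matrix analogous to Table~\ref{matrix} at $a=1$). Your enumeration of the twelve subgee types, the $11\times12$ homogeneous system of type (\ref{Ueq}), the uniqueness of the nonzero solution via $H^m(X)\cong\zt$, and the handling of the $b=1$ degeneracy all match the paper's method precisely.
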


\begin{prop} For the $X$ just described, there is a uniform homomorphism $\psi:H^{m-1}(X)\to\zt$ satisfying
\begin{equation}\label{pop}(\phi\otimes\psi)(\ol{w_1}^{m-1}\ \ol{w_2}^2\ \ol{w_3}\ \Rbar^{m-3})=1\end{equation}
except in the following situations:

\begin{itemize}
\item $b\equiv1$ mod 4, $c$ odd, and $d$ even,
\item $m-1$ is a 2-power, or
\item $m$ is a 2-power, $b\equiv2$ mod 4, $c$ odd, and $d$ even.
\end{itemize}

\end{prop}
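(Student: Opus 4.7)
My plan is to parallel Section~\ref{4sec}, especially the style of Theorems~\ref{thm1}--\ref{thm7}, adapted to the Type~1 setting where the first coordinate interval consists of the single index~$1$. First, I would apply Proposition~\ref{21prop} with $\alpha = m-1$ and $2m-4-\alpha = m-3$ to expand the bidegree $(m,m-1)$ component of $\ol{w_1}^{m-1}\ \ol{w_2}^2\ \ol{w_3}\ \Rbar^{m-3}$. Since the product involves no $w_4$, this expansion is formally identical to the one in Section~\ref{4sec}, with coefficients $q_t = \binom{m-3}{m-t}$ for $0\le t\le 4$ and all subscripts drawn from $\{1,2,3\}$.

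Next I would apply $\phi\otimes\psi$ using Proposition~\ref{23prop}. Because $\phi_0 = \phi_2 = \phi_3 = \phi_4 = \phi_{2,2} = \phi_{2,3} = 0$, a large fraction of the terms from the expansion immediately vanish, and what is left is a single linear equation over $\zt$ in the $\psi_W$'s, whose coefficients are explicit polynomials in $b,c,d$ times the $q_t$'s.  To this target equation I would adjoin the uniformity constraints of the form (\ref{Upeq}), one for each subgee type of size at least $2$: in the Type~1 case these types are $(1,2), (1,3), (1,4), (2,2), (2,3), (1,2,2),$ and $(1,2,3)$. The result is a linear system over $\zt$ whose solvability is governed by $b\bmod 4$, $c\bmod 2$, $d\bmod 2$, and the $2$-power status of $m$ (which controls the $q_t$'s).

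I would then delegate the feasibility check to \texttt{Maple}, as is done throughout Section~\ref{4sec}, and read off precisely which parameter combinations fail. Two corrections must be propagated through, coming from the $2$-power clauses of Proposition~\ref{21prop}: when $m$ is a $2$-power, the extra $Y_1\otimes Y_{1,2,3}$ term contributes $\phi_1\,\psi_{1,2,3}$ to the target equation; when $m-1$ is a $2$-power, the extra $Y_{1,2,3}\otimes Y_1 + Y_{1,3}\otimes Y_{1,2}$ contributes $\psi_1 + (b+1)\psi_{1,2}$. The main obstacle is not conceptual but combinatorial: confirming that the \texttt{Maple} output reproduces \emph{exactly} the three listed exceptional configurations and no others. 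I expect the $m-1 = 2^e$ case to be the most delicate, since there the $q_t$'s collapse enough that the target equation is essentially a combination of $\psi_1$ and $\psi_{1,2}$, which conflicts with the uniformity constraints for every choice of $b,c,d$; this is consistent with the second exception holding unconditionally, and with the necessity (as in Section~\ref{4sec}) of introducing an alternate product for those cases before the broader Theorem~\ref{mainthm} conclusion can be obtained.
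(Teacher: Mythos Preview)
Your proposal is correct and follows essentially the same route as the paper: apply Proposition~\ref{21prop} with $\alpha=m-1$, evaluate via Proposition~\ref{23prop}, adjoin the relations (\ref{Upeq}) for the size-$\ge 2$ subgee types, and check solvability case by case (the paper in fact carries out the row reduction by hand rather than via {\tt Maple}, writing the target explicitly as $\psi_1+(b+1)(\psi_2+\psi_{1,2})+(m-1)\phi_1(\psi_{2,3}+\psi_{1,2,3})$). Two small remarks: your heuristic for the $m-1=2^e$ case is slightly off---after the correction the target equation collapses to $(b+1)\psi_2=1$, not a combination of $\psi_1,\psi_{1,2}$---though your methodology still yields the unconditional failure; and the paper's proof block continues past the proposition proper to dispatch each listed exception with an alternate product, which you rightly flag as belonging to the subsequent argument rather than to this statement.
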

\begin{proof} By Propositions \ref{21prop} and \ref{23prop}, if neither $m$ nor $m-1$ is a 2-power, then (\ref{pop}) equals
\begin{equation}\label{ppp}\psi_1+(b+1)(\psi_2+\psi_{1,2})+(m-1)(\tbinom b2+(b+1)(c+1)+d)(\psi_{2,3}+\psi_{1,2,3}).\end{equation}
The relations that $\psi$ must satisfy, in addition to making (\ref{ppp}) equal to 1, are that for each of the subscripts $S$ of length 2 or 3 in Proposition \ref{23prop}
the analogue of (\ref{Upeq}) (or the analogue of the last six rows of Table \ref{matrix}, with $a=1$) must equal 0. This row reduction can be done by hand, finding that a solution
exists unless $b\equiv1$ mod 4, $c$ odd, and $d$ even. If $m$ is a 2-power, then (\ref{ppp}) becomes
$$\psi_1+(b+1)(\psi_2+\psi_{1,2})+(\tbinom b2+(b+1)(c+1)+d)\psi_{2,3},$$
and we find that a solution making this equal to 1 exists unless $b\equiv1$ or 2 mod 4, $c$ is odd, and $d$ even.

If $m-1$ is a 2-power, we use $(\phi\ot\psi)(\ol{w_1}^m\,\ol{w_2}^2\,\ol{w_3}\ \Rbar^{m-4})$. Using Proposition \ref{21prop} again, the expression which must equal 1 is
$$\psi_1+(b+1)\psi_{1,2}+(\tbinom b2+(b+1)(c+1)+d)(\psi_{2,3}+\psi_{1,2,3}),$$
and this can be achieved, consistent with the relations, unless $b\equiv1$ mod 4, $c$ odd, and $d$ even.

When $m>4$, $b\equiv1$ mod 4, $c$ odd, and $d$ even, we use the product $\ol{w_1}^{m-\eps}\ \ol{w_2}^2\ \ol{w_3}^3\ \Rbar^{m-6+\eps}$ used in Proposition \ref{prop111}.
The only nonzero value $\phi_S$ relevant to this product is $\phi_{1,2,3}$. One easily checks that there is a uniform homomorphism $\psi$ whose only nonzero values are
$\psi_{1,2}$ and $\psi_{1,3}$. Because of the $\ol{w_2}^2$, there can be no $Y_{1,2,3}\ot Y_{1,2}$ term in the expansion, and, as in the proof of \ref{prop111}, the coefficient of $Y_{1,2,3}\ot Y_{1,3}$ is nonzero.

When $m=4$, $b\equiv1$ mod 4, $c$ odd, and $d$ even, the nonzero $\phi$-values are $\phi_{1,4}$, $\phi_{1,2,2}$, and $\phi_{1,2,3}$, and there is a uniform homomorphism $\psi$ on $H^{m-1}(X)$ whose nonzero values are $\psi_0$, $\psi_4$, $\psi_{2,2}$, and $\psi_{2,3}$. Then $(\phi\ot\psi)(\ol{w_1}^2\ \ol{w_4}^3\, \Rbar^2)=\phi_{1,4}\psi_4=1 $.

Finally, if $m=2^e$, $b\equiv2$ mod 4, $c$ odd, and $d$ even, then the nonzero $\phi$-values are $\phi_1$, $\phi_{1,2}$, $\phi_{1,3}$, $\phi_{1,4}$, $\phi_{1,2,2}$, and $\phi_{1,2,3}$, and there is a uniform homomorphism $\psi$ with nonzero values $\psi_2=\psi_3=\psi_4=\psi_{2,2}=\psi_{2,3}=1$. Then $(\phi\ot\psi)(\ol{w_1}^{2^e}\ \ol{w_4}\ \Rbar^{2^e-2})=\phi_1\psi_4=1$.
\end{proof}

 \def\line{\rule{.6in}{.6pt}}


\begin{thebibliography}{99}
\bibitem{D} D.M.Davis, {\em Real projective space as a space of planar polygons}, Morfismos {\bf 19} (2015) 1--6.
%{\em Topological complexity of some planar polygon spaces}, on arXiv.
\bibitem{maple} \line, {\tt http://www.lehigh.edu/$\sim$dmd1/mapleresults.pdf}.
\bibitem{27} \line, {\tt http://www.lehigh.edu/$\sim$dmd1/27cases.pdf}.
%\bibitem{Da} \line, {\em Immersions of real projective spaces}, Proceedings of Lefschetz Conference, Contemporary Math, Amer Math Soc. {\bf 58} (1987) 31--42.
\bibitem{F} M.Farber, {\em Invitation to topological robotics}, European Math Society (2008).
%\bibitem{F2} \line, {\em Topological complexity of motion planning}, Discrete Comput.~Geom. {\bf 29} (2003) 211--221.
%\bibitem{FTY} M.Farber, S.Tabachnikov, and S.Yuzvinsky, {\em Topological robotics: motion planning in projective spaces}, Intl Math  Research Notices {\bf 34} (2003) 1853--1870.
%\bibitem{Hb} J.-C.Hausmann, {\em Sur la topologie des bras articul\'es}, Lecture Notes in Mathematics, Springer {\bf 1474} (1989) 146--159.
\bibitem{HK} J.-C.Hausmann and A.Knutson, {\em The cohomology rings of polygon spaces}, Ann Inst Fourier (Grenoble) {\bf 48} (1998) 281--321.
\bibitem{H} J.-C.Hausmann and E.Rodriguez, {\em The space of clouds in Euclidean space}, Experiment Math {\bf 13} (2004) 31--47.
\bibitem{web} \line, {\tt http://www.unige.ch/math/folks/hausmann/polygones}
%\bibitem{KK} Y.Kamiyama and K.Kimoto, {\em The height of a class in the cohomology ring of polygon spaces}, Int Jour of Math and Math Sci (2013) 7 pages.
%\bibitem{Kl} A.Klyachko, {\em Spatial polygons and stable configurations of points in the projective line}, Algebraic Geometry and its Applications (Yaroslav, 1992), Aspects Math, Vieweg, Braunschweig (1994) 67--84.
\end{thebibliography}
\end{document}